\documentclass{article}
\usepackage{amssymb,amsmath,amsthm,verbatim}
\usepackage{enumerate}
\usepackage{tikz}


\newcommand{\GA}{{\rm GA}}
\newcommand{\MA}{{\rm MA}}

\newcommand{\BA}{{\rm BA}}
\newcommand{\TA}{{\rm TA}}

\newcommand{\Af}{{\rm Aff}}

\DeclareMathOperator{\ldeg}{ldeg}

\DeclareMathOperator{\Spec}{Spec}
\DeclareMathOperator{\id}{id}

\newcommand{\A}{\mathbb{A}}

\newcommand{\IN}{\mathbb{N}}

\newcommand{\IR}{\mathbb{R}}

\newcommand{\Ik}{\mathbb{K}}

\newcommand{\s}{\sigma}

\newcommand{\al}{\alpha}
\newcommand{\bx}{{\bf x}}

\newtheorem{theorem}{Theorem}
\newtheorem{proposition}[theorem]{Proposition}
\newtheorem{lemma}[theorem]{Lemma}

\newtheorem{corollary}[theorem]{Corollary}
\newtheorem{conjecture}{Conjecture}
\newtheorem{question}[conjecture]{Question}

\theoremstyle{definition}
\newtheorem{definition}{Definition}

\theoremstyle{remark}
\newtheorem{remark}{Remark}
\newtheorem{example}{Example}

\newcommand{\New}{\ensuremath{{\rm New}}}

\newcommand{\supp}{{\ensuremath{\rm supp\ }} }

\newcommand{\conv}{\ensuremath{{\rm conv}}}

\newcommand{\cotame}{co-tame}

\newcommand{\pri}{\ensuremath{\smallsetminus}}

\title{The affine automorphism group of ${\mathbb A}^3$ is not a maximal subgroup of the tame automorphism group}

\author{%
Eric Edo\thanks{ERIM, University of New Caledonia. Email address: \texttt{eric.edo@univ-nc.nc}} and %
Drew Lewis\thanks{Department of Mathematics,  University of Alabama.  Email address: \texttt{dlewis@ua.edu}}%
}

\begin{document}
\maketitle

\begin{abstract}
We construct explicitly a family of proper subgroups of the tame automorphism group of affine three-space (in any characteristic)
which are generated by the affine subgroup and a non-affine tame automorphism.  One important corollary is the titular result that settles negatively the open question (in characteristic zero) of whether  the affine subgroup is a maximal subgroup of the tame automorphism group.  We also prove that all groups of this family have the structure of an amalgamated free product of the affine group and a finite group over their intersection.
\end{abstract}

\section{Introduction}

Throughout, $\Ik$ denotes a field of any characteristic.
We denote by $\GA_n(\Ik)$ the group of polynomial automorphisms of $\A^n_{\Ik}$.
We consider ${\rm Aff}_n(\Ik)$ (resp. ${\rm BA}_n(\Ik)$, resp. ${\rm TA}_n(\Ik)$), the subgroup of
$\GA_n(\Ik)$ of affine (resp. triangular, resp. tame) automorphisms (see Section~2 or \cite{vdE} for precise definitions).
In this paper we are interested with the question of finding proper intermediate subgroups between
${\rm Aff}_n(\Ik)$ and ${\rm TA}_n(\Ik)$.

If $n=2$, it is well known that such intermediate subgroups exist.  The classical Jung-van der Kulk theorem (\cite{Jung,Kulk}) states that
${\rm GA}_2(\Ik)={\rm TA}_2(\Ik)$ and, moreover, $\GA_2(\Ik)$ is the amalgamated free product of ${\rm Aff}_2(\Ik)$ and ${\rm BA}_2(\Ik)$ along their intersection.
Using this structure theorem, we can uniquely define the height of any automorphism $\phi \in \GA_2(\Ik)$ as the maximum of the degrees of the triangular automorphisms in any reduced decomposition of  $\phi$, and let $H_d$ denote the set of all automorphisms of height at most $d$.  Then we have ${\rm Aff_2}(\Ik)=H_1 \subset H_2 \subset H_3 \subset\cdots \subset \TA_2(\Ik)$ is an ascending sequence of (proper) subgroups of ${\rm TA}_2(\Ik)$.
In particular, for all $\beta\in{\rm BA}_2(\Ik)\pri{\rm Aff}_2(\Ik)$ then $\langle{\rm Aff}_2(\Ik),\beta\rangle$
is a proper subgroup of ${\rm TA}_2(\Ik)$.

In the case that $n > 2$ and $\Ik$ has positive characteristic, then it is also known that there are many intermediate subgroups between ${\rm Aff}_n(\Ik)$
and ${\rm TA}_n(\Ik)$ (see, for example, \cite{EK}).  However, in characteristic zero, the question is much more nuanced\footnotemark.  The first partial results in this direction concern subgroups of the form $\langle \Af_n(\Ik), \beta \rangle$ for a single automorphism $\beta \in \GA_n(\Ik) \setminus {\rm Aff _n}(\Ik)$.  In 1997, Derksen gave an elementary proof (unpublished, but see \cite{vdE} Theorem 5.2.1 for a proof) that the triangular automorphism $\s:=(x_1+x_2^2,x_2,\ldots,x_n)\in{\rm BA}_n(\Ik)$, along with the affine subgroup, generates the entire tame group (when ${\rm char}(\Ik)=0$); that is,  $\langle{\rm Aff}_n(\Ik),\s\rangle={\rm TA}_n(\Ik)$.  This motivated the definition of {\em \cotame} automorphisms as follows:
\begin{definition}
An automorphism $\phi\in\GA_n(\Ik)$ is called {\em \cotame} if $\langle{\rm Aff}_n(\Ik),\phi\rangle\supset{\rm TA}_n(\Ik)$.
\end{definition}
\footnotetext{Recently, Wright \cite{Wright} showed that in characteristic zero, $\TA_3 (\Ik)$ is an amalgamated free product of three subgroups along their pairwise intersection, which implies a much weaker structure on $\TA_3(\Ik)$.  Unlike in dimension two, we no longer have a reasonably unique representation of every tame automorphism. }

One can naturally ask:

\begin{question}\label{Q:cotame}Let $n \geq 3$, and let $\Ik$ be a field of characteristic zero. Is every automorphism in $\GA_n(\Ik)$ \cotame?
\end{question}

Note that this is intimately related to the question of finding intermediate subgroups, as an example of an automorphism $\beta$ which is tame but not \cotame\ would provide an intermediate subgroup ${\rm Aff}_n(\Ik) \subset \langle {\rm Aff }_n (\Ik), \beta \rangle \subset \TA_n(\Ik)$.

In 2004, Bodnarchuk \cite{Bodnarchuk} generalized Derksen's result in the following way: if $\Ik$ has characteristic zero,
then all non-affine triangular and bitriangular automorphisms (i.e.,  elements of the form $\beta _1 \alpha \beta _2$ for some $\beta _1, \beta _2 \in \BA_n(\Ik)$ and $\alpha \in \Af _n(\Ik)$) are \cotame.  Interestingly, the first author \cite{E} recently showed that certain wild (i.e., not tame) automorphisms, including the famous Nagata automorphism, are \cotame.

In this paper, we provide a negative answer to Question \ref{Q:cotame} when $n=3$ by constructing an automorphism which is tame but not \cotame.  More precisely, fix an integer $N\ge 1$ and
consider the automorphisms $\beta=(x+y^2(y+z^2)^2,y+z^2,z)\in \BA_3(\Ik)$, $\pi=(y,x,z)\in \Af_3(\Ik)$ and
$\theta_N=(\pi\beta)^N\pi(\pi\beta)^{-N} \in \TA_3(\Ik)$. We prove the following result (without any assumption about the characteristic of $\Ik$):\\

\noindent {\bf Main Theorem.} \textit{For all integers $N\ge 3$, the automorphism $\theta_N$ is not \cotame.
In other words, $\langle{\rm Aff}_3(\Ik),\theta_N\rangle$ is a proper subgroup of ${\rm TA}_3(\Ik)$.
Moreover, this group is the amalgamated free product of ${\rm Aff}_3(\Ik)$ and
$\langle{\cal C},\theta_N\rangle$ along their intersection ${\cal C}$ where ${\cal C}=\{\al\in{\rm Aff}_3(\Ik)\ |\ \al\theta_N=\theta_N\al\}$ is a finite cyclic group.\\
}

\begin{remark} $\theta _1$ is \cotame\ by the aforementioned result of Bodnarchuk.
\end{remark}

The main theorem immediately implies the result in the title of this paper:
\begin{corollary}
For any field $\Ik$, ${\rm Aff} _3 (\Ik)$ is not a maximal subgroup of $\TA _3 (\Ik)$.
\end{corollary}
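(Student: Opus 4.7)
The corollary will follow essentially immediately from the Main Theorem. The plan is to exhibit, for any field $\Ik$, a subgroup that sits strictly between $\Af_3(\Ik)$ and $\TA_3(\Ik)$. Fix any integer $N \geq 3$ (for concreteness, take $N = 3$) and set $H := \langle \Af_3(\Ik), \theta_N \rangle$. The Main Theorem supplies the strict inclusion $H \subsetneq \TA_3(\Ik)$ immediately. To conclude that $\Af_3(\Ik)$ is not maximal in $\TA_3(\Ik)$, it remains only to verify the other strict inclusion $\Af_3(\Ik) \subsetneq H$, equivalently, that $\theta_N \notin \Af_3(\Ik)$.

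Checking that $\theta_N$ is non-affine is a routine degree computation. Expanding, $y^2(y+z^2)^2 = y^4 + 2y^3z^2 + y^2z^4$ has total degree $6$, so $\beta$ has degree $6$, and hence so does $\pi\beta$. Tracking leading monomials under iteration shows that $(\pi\beta)^N$ has degree strictly growing with $N$, and consequently the conjugate $\theta_N = (\pi\beta)^N \pi (\pi\beta)^{-N}$ has degree strictly greater than $1$ for every $N \geq 1$; in particular $\theta_N \notin \Af_3(\Ik)$. Combining this with the Main Theorem yields the chain $\Af_3(\Ik) \subsetneq H \subsetneq \TA_3(\Ik)$, which exhibits the required intermediate subgroup and thus proves that $\Af_3(\Ik)$ is not a maximal subgroup of $\TA_3(\Ik)$.

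There is essentially no obstacle in this argument; the only step requiring any calculation is the non-affineness of $\theta_N$, which is entirely elementary. All the substantive content lies in the Main Theorem itself.
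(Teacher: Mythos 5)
Your overall route is the same as the paper's: the corollary is read off immediately from the Main Theorem, with the only thing left to check being the strict inclusion $\Af_3(\Ik)\subsetneq\langle\Af_3(\Ik),\theta_N\rangle$, i.e.\ that $\theta_N$ is not affine. The conclusion is correct, but your justification of that last point is not sound as written. The inference ``$(\pi\beta)^N$ has large degree, hence the conjugate $(\pi\beta)^N\pi(\pi\beta)^{-N}$ has degree greater than $1$'' does not follow: degree is only submultiplicative under composition, and conjugation by a high-degree automorphism can perfectly well produce an affine map (for instance, if $\pi$ commuted with $\pi\beta$ the conjugate would be $\pi$ itself, of degree $1$, no matter how large $\deg(\pi\beta)^N$ is). So one must actually rule out cancellation of leading terms, which is not a formality.

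Two clean repairs are available. First, entirely inside the paper's machinery: apply Corollary~\ref{cor:Pstable1} with $r=1$ and $\al_0=\al_1=\id$ (the hypothesis on $\al_1,\dots,\al_{r-1}$ is vacuous) to get $\al,\al'\in\mathcal{A}$ with $(y)\al\theta_N\al'\in\mathcal{P}^*$; if $\theta_N$ were affine then $\al\theta_N\al'$ would be affine, contradicting the observation in the proof of Corollary~\ref{cor:com} that $\ldeg_2((y)\gamma)\le_2(0,1,0)$ for $\gamma\in\mathcal{A}$, so no image of $y$ under an affine map lies in $\mathcal{P}^*$. Second, more elementarily: $\beta$ and $\pi$ fix $z$, so $\theta_N$ lies in the tame subgroup of $\GA_2(\Ik[z])$, where $\theta_N=\pi\beta\cdots\pi\beta\,\pi\,\beta^{-1}\pi\cdots\beta^{-1}\pi$ is a reduced word of length $4N+1$ alternating between affine-non-triangular and triangular-non-affine letters; by the amalgamated free product structure in two variables such a word cannot be affine. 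Either fix is short, but some such argument is genuinely needed.
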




In section \ref{s:general}, we describe some general, commonly used definitions.  We make some specific notations and definitions in section \ref{s:notations} which are necessary to state our key technical result (Theorem \ref{thm:Pstable}).  This statement of Theorem \ref{thm:Pstable} and the proofs of its consequences (including the main theorem) comprise  section \ref{s:main}.  The proof of Theorem \ref{thm:Pstable} is quite technical, and is deferred to the ultimate section.

\section{General definitions}\label{s:general}
\subsection{Degrees}
Let $n\ge 1$ be an integer.
We denote by $\Ik[\bx]=\Ik[x_1,\ldots,x_n]$ the polynomial algebra in $n$ commutative variables $\bx=\{x_1,\ldots,x_n\}$.
We write $\bx^v=x_1^{i_1}\cdots x_n^{i_n}$ for any $v=(i_1,\ldots,i_n)\in\IN^n$.
For a given $P\in \Ik[\bx]$ we denote by $\supp(P)\subset\IN^n$ the \textit{support} of $P$; that is, the
set of $n$-tuples $v\in\IN^n$ such that the coefficient of $\bx^v$ in $P$ is nonzero.

The main technical tool in this paper is the use of various degree functions.  Here, we mean ``degree function'' in a little more generality than most authors, so we give a precise definition.  Typically, the co-domain of a degree function is the natural numbers or the integers; we instead consider any totally ordered commutative monoid $M$, and set $\overline{M}=M \cup {-\infty}$ with the convention that $-\infty+n=-\infty$ and $-\infty < n$ for all $n \in M$.
\begin{definition}Let $A$ be a $\Ik$-domain and $M$ a totally ordered commutative monoid.
A map $\deg : A \rightarrow \overline{M}$ is called a {\em degree function} provided that
\begin{enumerate}
\item $\deg(f)=-\infty$ if and only if $f=0$,
\item $\deg(fg)=\deg(f)+\deg(g)$  for all $f,g \in A$,
\item $\deg(f+g) \leq \max \{ \deg(f), \deg(g)\}$ for all $f,g \in A$.
\end{enumerate}
\end{definition}

Two easy consequences of the definition are that $\deg(c)=0$ for any $c \in \Ik^*$, and that if $\deg(f) \neq \deg(g)$, then equality holds in property 3.

The two families of degree functions that we will use are {\em weighted degree} and {\em lexicographic degree}, the latter of which takes values in $\IN^n$.
\begin{itemize}
\item
For any $w\in\IN^n\pri\{\bf 0\}$, we denote the $w$\textit{-weighted degree} of $P$ by $$ \deg_w(P)=\max _{v \in \supp(P)} \{v \cdot w\},$$ where ($\cdot$) denotes the scalar product
in $\IR^n$.  The case $w=(1,\ldots,1)$ corresponds to the usual notion of the total degree of a polynomial.

\item
For an integer $1\le i\le n$, let ${\geq} _i$ denote the $i$-th cyclic lexicographic ordering of $\IN^n$; that is, the standard basis vectors are ordered by $$e_i >_i e_{i+1} >_i \cdots >_i e_n >_i e_1 >_i \cdots >_i e_{i-1}.$$  
Letting ${\rm max} _i$ denote the maximum with respect to this ordering, we define the {\em $i$-th lexicographic degree} of $P$ to be
 $${\rm ldeg}_i(P) = {\rm max} _i (\supp(P))$$

\end{itemize}

\begin{example} Let $P=x+y^2(y+z^2)^2$.  Then we have 
\begin{align*}
 \deg _{(4,1,0)} (P) &= 4  & \ldeg _1 (P) &= (1,0,0) \\
\deg _{(4,0,1))} (P) &= 4 & \ldeg _2 (P) &= (0,4,0) \\
\deg _{(8,2,1)} P &=8 & \ldeg _3 (P) &= (0,2,4).
\end{align*}
\end{example}


\subsection{Polynomial Automorphisms}
We adopt the following standard notations of polynomial automorphism groups:
\begin{itemize}
\item $\MA_n(\Ik)$ denotes the monoid of polynomial endomorphisms; that is, the set $\Ik[\bx]^n$ with the composition $$(\phi_1,\ldots,\phi_n)(\psi_1,\ldots,\psi_n)=(\phi_1(\psi_1,\ldots,\psi_n),\ldots,\phi_n(\psi_1,\ldots,\psi_n)).$$
\item $\GA_n(\Ik)$ is the group of polynomial automorphisms (or the {\em general automorphism group}), defined to be the group of invertible elements of $\MA_n(\Ik)$.
\item The \textit{affine subgroup} of $\GA_n(\Ik)$ is
$$\Af_n(\Ik)=\{(\phi_1,\ldots,\phi_n)\in\GA_n(\Ik)\,|\,\deg_{(1,\ldots,1)}(\phi_i)=1\ \text{for each}\ 1\le i\le n\}.$$
\item The \textit{triangular subgroup} of $\GA_n(\Ik)$ is
$$\BA_n(\Ik)=\{(\phi_1,\ldots,\phi_n)\in\GA_n(\Ik)\,|\,\phi_i\in\Ik^*x_i+\Ik[x_{i+1},\ldots, x_n]\ \text{for each}\ 1\le i\le n\}$$
\item The \textit{tame subgroup} is $\TA_n(\Ik) = \langle \Af_n(\Ik), \BA_n(\Ik) \rangle$.  It is well known to be the entire group $\GA_n(\Ik)$ for $n=1,2$, while Shestakov and Umirbaev \cite{SU} famously showed that it is a proper subgroup when $n=3$ and ${\rm char} (\Ik)=0$.  Whether it is a proper subgroup or not is a well known, quite difficult open question in higher dimensions and/or positive characteristic.
\end{itemize}

The group $\GA_n(\Ik)$ is isomorphic to the group of automorphisms of $\Spec \Ik[\bx]$ over $\Spec \Ik$, and is anti-isomorphic to the group of $\Ik$-automorphisms of $\Ik[\bx]$.
We freely abuse this correspondence and, given $\phi\in \GA_n(\Ik)$ and $P \in\Ik[\bx]$, we denote by $(P)\phi \in \Ik[\bx]$ the image of $P$ by the $\Ik$-automorphism of $\Ik[\bx]$ corresponding to $\phi$.
By writing the automorphism on the right, the expected composition holds,
namely $(P)\phi \psi = ((P)\phi)\psi$ for $P\in\Ik[\bx]$ and $\phi, \psi \in \GA_n(\Ik)$.  We refer the reader to \cite{vdE} for a comprehensive reference on polynomial automorphisms.

We make one elementary observation on how one can compute the degree of the image of a polynomial under an automorphism.  We will use this frequently and without further mention.
\begin{lemma}
Let $\gamma\in\GA_n(\Ik)$, and let $P\in\Ik[\bx]$.
Let $\deg: \Ik[\bx] \rightarrow \overline{M}$ denote a degree function (for some totally ordered commutative monoid $M$), and let $m  \in M$.
If $\deg((\bx^v)\gamma)\le m$ for all $v\in\supp(P)$, then $\deg((P)\gamma)\le m$.
Moreover, if $\deg((\bx^v)\gamma)=m$ for a unique $v\in\supp(P)$ then $\deg((P)\gamma)= m$.
\end{lemma}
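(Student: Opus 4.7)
The plan is to reduce the statement to a direct unraveling of the three axioms of a degree function, using that the correspondence $P\mapsto(P)\gamma$ is a $\Ik$-algebra homomorphism on $\Ik[\bx]$ (since $\gamma\in\GA_n(\Ik)$), hence in particular $\Ik$-linear.

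First I would write $P=\sum_{v\in\supp(P)}c_v\bx^v$ with $c_v\in\Ik^*$, and use linearity of the action on the right to get
\[
(P)\gamma=\sum_{v\in\supp(P)}c_v\,(\bx^v)\gamma.
\]
Since $\deg(c_v)=0$ for nonzero scalars (a noted consequence of axioms 1 and 2), axiom 2 yields $\deg\bigl(c_v(\bx^v)\gamma\bigr)=\deg((\bx^v)\gamma)$. Then an induction on $|\supp(P)|$ using axiom 3 produces $\deg((P)\gamma)\le\max_{v\in\supp(P)}\deg((\bx^v)\gamma)\le m$, giving the first assertion.

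For the second assertion, suppose $v_0\in\supp(P)$ is the unique element with $\deg((\bx^{v_0})\gamma)=m$, and set $Q=P-c_{v_0}\bx^{v_0}$, so $\supp(Q)\subseteq\supp(P)\setminus\{v_0\}$ and every $v\in\supp(Q)$ satisfies $\deg((\bx^v)\gamma)<m$. The first assertion (applied with a strictly smaller bound) then forces $\deg((Q)\gamma)<m=\deg(c_{v_0}(\bx^{v_0})\gamma)$, so by the noted consequence of axiom 3 (equality when the two degrees differ),
\[
\deg((P)\gamma)=\deg\bigl(c_{v_0}(\bx^{v_0})\gamma+(Q)\gamma\bigr)=m.
\]

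There is no real obstacle here; the only subtlety is remembering that ``$\le m$'' in the first part is not assumed strict, so the induction must be phrased with a weak inequality, while the uniqueness hypothesis in the second part is exactly what is needed to upgrade axiom 3 to the equality case after isolating the dominant term.
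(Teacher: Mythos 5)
Your proof is correct. The paper actually states this lemma without proof, presenting it as an elementary observation to be used freely, so there is no official argument to compare against; your write-up is exactly the intended one: decompose $P$ into monomials, use $\deg(c_v)=0$ for $c_v\in\Ik^*$ together with axioms 2 and 3 to bound $\deg((P)\gamma)$ by the maximum, and then invoke the equality case of axiom 3 (degrees differing forces equality) after isolating the unique dominant term. The one point worth making explicit is that in the second part the bound for $Q=P-c_{v_0}\bx^{v_0}$ is $\max_{v\in\supp(Q)}\deg((\bx^v)\gamma)$, which is strictly less than $m$ because $\supp(Q)$ is finite and $M$ is totally ordered; you handle this correctly, and the degenerate case $\supp(Q)=\emptyset$ is covered by the convention $\deg(0)=-\infty<m$.
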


\section{Notations}\label{s:notations}

For the remainder of this paper, we restrict our attention to dimension 3.
For convenience, we set $\bx=\{x,y,z\}$ instead of $\{x_1,x_2,x_3\}$. We denote by ${\cal A}={\rm Aff}_3(\Ik)$
(resp. ${\cal B}=\BA_3(\Ik)$) the subgroup of affine (resp. triangular) automorphisms. 

We fix also an integer $N\ge 3$ and
we consider the following automorphisms:
\begin{align*}
\beta & =(x+y^2(y+z^2)^2,y+z^2,z)\in {\cal B} \\
\pi & =(y,x,z)\in {\cal A} \\
\theta & =\theta_N= (\pi\beta)^N\pi(\pi\beta)^{-N}\in \TA_3(\Ik).
\end{align*}

\begin{remark} We will repeatedly make use of the fact that $\pi$ and $\theta$ are involutions, i.e. $\beta^2=\pi^2=\id$.
\end{remark}

\subsection{Some sets of polynomials}

Let $m\ge 1$ and $n\ge 0$ be integers.  The following technical definitions will play a crucial role in our methods (see figures \ref{figp} and \ref{figq}):

\begin{align*}
P_{m,n}              &= \{(i,j,k) \in \IN^3\ |\ 4i+j \leq 4m,\ 4i+k \leq 4m+n,\ 8i+2j+k \leq 8m+n\} \\
\mathcal{P}_{m,n}    &= \{P \in\Ik[\bx]\ |\ \supp(P)\in P_{m,n}\} \\
                    &=\{P \in\Ik[\bx]\,|\,\deg_{(4,1,0)}(P)\leq 4m,\ \deg_{(4,0,1)}(P) \leq 4m+n,\ \deg_{(8,2,1)}(P)\leq 8m+n\} \\
\mathcal{P}_{m,n} ^* &= \{P \in \mathcal{P}_{m,n}\ |\ {\rm ldeg}_2(P)=(0,4m,n),\ {\rm ldeg}_3(P)=(0,2m,4m+n)\} \\
Q_{m,n}              &= \{(i,j,k) \in \IN^3\ |\ i+j \leq m,\ 3i+3j+k \leq 3m+n\} \\
\mathcal{Q}_{m,n}    &= \{P \in\Ik[\bx]\ |\ \supp(P)\in Q_{m,n}\} \\
                     &= \{P \in\Ik[\bx]\ |\ \deg_{(1,1,0)} \leq m,\ \deg_{(3,3,1)} \leq 3m+n \} \\
\mathcal{Q}_{m,n} ^* &= \{P \in \mathcal{Q}_{m,n}\ |\ {\rm ldeg}_2(P)=(0,m,n)\} 
\end{align*}

Moreover, we consider
$$\mathcal{P}^*=\bigcup_{m\ge 1,n\ge 0}\mathcal{P}_{m,n} ^*\hspace{.3cm}{\rm and}\hspace{.3cm}
\mathcal{Q}^*=\bigcup_{m\ge 1,n\ge 0}\mathcal{Q}_{m,n} ^*$$
For all integers $m\ge 1$ and $n\ge 0$, one can easily check that $\mathcal{P}_{m,n}^*\subset \mathcal{Q}_{4m,n}^*$, and thus $\mathcal{P}^*\subset \mathcal{Q}^*$.

\begin{example} $(x^mz^n)\beta \in \mathcal{P}_{m,n}^*$ for all $m \geq 1$, $n \geq 0$.
\end{example}

\begin{figure}[h]
\begin{center}

\begin{tikzpicture}[line join = round, line cap = round,scale=0.75, every node/.style={transform shape}]
\pgfmathsetmacro{\m}{1.4};
\pgfmathsetmacro{\n}{1.6};
\coordinate [label=right:{$(0,4m,n)$}] (A) at (4*\m,\n,0);
\coordinate  [label=below :{$(0,4m,0)$}] (B) at (4*\m,0,0);
\coordinate  [label=left:{$(m,0,n)$}](C) at (0,\n,\m);
\coordinate  [label=left:{$(m,0,0)$}] (D) at (0,0,\m);
\coordinate [label=above right:{$(0,2m,4m+n)$}](E) at (2*\m, 4*\m+\n,0);
\coordinate  [label=left: {$(0,0,4m+n)$}] (F) at (0,4*\m+\n,0);

\draw[-] (A) -- (B) -- (D) -- (C) --cycle;
\draw[label=left: {test}] (A) -- (C) -- (E) -- cycle;
\draw[-] (C) -- (E) -- (F) --cycle;

\node[draw=none] at (2*\m,\m+\n,0) {  $8i+2j+k = 8m+n$};
\draw[-latex] (\m,4.5*\m+\n,0) to[out=225,in=90,looseness=1] (0.5*\m, 4*\m+0.5*\n, 0) ;
\node[draw=none] at (1.5*\m, 4.6*\m+\n,0) {\small $4i+k=4m$};

\draw[-latex] (3*\m, -1/2*\n,0) to [out=135,in=270,looseness=1] (2*\m,\n /2,0);
\node[draw=none] at (3.5*\m,-1.3*\n /2,0) {$4i+j = 4m$};

\draw[->] (0,0) -- (5*\m,0,0) node[right] {$j$};
\draw[->] (0,0) -- (0,5*\m+\n,0) node[above] {$k$};
\draw[->] (0,0) -- (0,0,2*\m) node[below left] {$i$};

\end{tikzpicture}
\caption{ $P_{m,n}$\label{figp} }
\end{center}
\end{figure}

\begin{figure}
\begin{center}
\begin{tikzpicture}[line join = round, line cap = round, scale=0.75, every node/.style={transform shape}]
\pgfmathsetmacro{\m}{2};
\pgfmathsetmacro{\n}{1.5};
\coordinate [label=right:{$(0,m,n)$}] (A) at (\m,\n,0);
\coordinate  (B) at (\m,0,0);
\coordinate  [label=left:{$(m,0,n)$}](C) at (0,\n,\m);
\coordinate  (D) at (0,0,\m);
\coordinate  [label=above right:{$(0,0,3m+n)$}] (F) at (0,3*\m+\n,0);

\draw[-] (A) -- (B) -- (D) -- (C) --cycle;
\draw[-] (A) -- (C) -- (F) -- cycle;


\draw[-latex] (\m,\m+\n,0) to[out=180,in=90,looseness=1] (0.5*\m, \m+0.5*\n, 0) ;
\node[draw=none] at (1.9 *\m,\m+\n,0) { $3i+3j+k=3m+n$};

\draw[-latex] (\m, 0, 0.75*\m) to [out=180,in=270,looseness=1] (0.3*\m, 1/4*\n,0);
\node[draw=none] at (1.4*\m,0,0.75*\m) { $i+j=m$};

\draw[->] (0,0) -- (1.5*\m,0,0) node[right] {$j$};
\draw[->] (0,0) -- (0,3.5*\m+\n,0) node[above] {$k$};
\draw[->] (0,0) -- (0,0,1.5*\m) node[below left] {$i$};

\end{tikzpicture}
\caption{ $Q_{m,n}$\label{figq} }
\end{center}
\end{figure}

\begin{remark}These definitions are variations of a standard tool for studying polynomials, namely the Newton polytope.  The Newton polytope of a polynomial $P$ is defined as $\New (P) = \conv \left( \supp(P) \cup \{\bf 0\} \right)$ (here, $\conv$ denotes the convex hull in $\IR^3$).  For any $(a,b,c) \in \IN^3$, let ${\rm cub}(a,b,c)$ denote the rectangular cuboid $${\rm cub}(a,b,c)=\{(i,j,k) \in \IN^3\ |\ 0\leq i\leq a,\ 0\leq j \leq b,\ 0 \leq k \leq c\}.$$
For a set $S \subset \IN^3$, we set $${\rm cub}(S)=\bigcup _{(a,b,c) \in S} {\rm cub}(a,b,c).$$  Then we can interpret $P_{m,n}$ as $$P_{m,n}={\rm cub}\left( \supp \left( (x+y^2(y+z^2)^2)^mz^n \right) \right).$$
In particular, $\New \left(( (x+y^2(y+z^2)^2)^mz^n \right) \cap \IN^3 \subset P_{m,n}$.
\end{remark}

\subsection{Some subgroups of the affine group}

We consider the following nested sequence of subgroups of the affine group:
\begin{align*}
{\cal A}_0 &={\cal A} = {\rm Aff} _3 (\Ik),\\
{\cal A}_1 &={\cal A} \cap {\cal B} = {\rm Aff} _3 (\Ik) \cap  {\BA_3}(\Ik)  ,\\
{\cal A}_2 &=\{(u^8x+by+cz+d,u^2y,uz)\ |\ u\in\Ik^*,\,b,c,d\in\Ik\},\\
{\cal A}_3 &=\{(u^8x+cz+d,u^2y,uz)\ |\ u\in\Ik^*,\,c,d\in\Ik\},\\
{\cal A}_4 &=\{(u^2x,u^2y,uz)\ |\ u\in\Ik^*,u^6=1\}.
\end{align*}
If we set ${\cal C}=\{\al\in{\cal A}\ |\ \al\theta=\theta\al\}$, we have ${\cal A}_4\subset{\cal C}$
since for every element $\al\in{\cal A}_4$ it's easy to check that $\al\pi=\pi\al$ and $\al\beta=\beta\al$.
The opposite inclusion ${\cal C}\subset{\cal A}_4$ is a consequence of our main result.

\section{Main results}\label{s:main}

Using results from section~\ref{s:5props}, we prove our main technical result.
\begin{theorem}\label{thm:Pstable}
The set ${\cal P}^*$ is stable under the action of the automorphisms $\pi\beta$, $\pi\beta^{-1}$
and $(\pi\beta^{-1})^3\al\pi(\pi\beta)^3$ for any $\al\in{\cal A}\pri{\cal A}_4$.
\end{theorem}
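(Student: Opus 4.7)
The plan is to establish each of the three stability claims separately, using the auxiliary results from Section~\ref{s:5props} as building blocks. The first two cases, $\pi\beta$ and $\pi\beta^{-1}$, are amenable to direct computation, while the third requires combining these with a careful analysis of the affine step $\alpha\pi$ on a suitable intermediate class of polynomials.

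For the first two cases, the approach is structurally identical, since $\beta^{-1}$ differs from $\beta$ only in signs, which do not affect support or leading-term analysis. Given $P \in \mathcal{P}_{m,n}^*$, I aim to show $(P)\pi\beta \in \mathcal{P}_{4m,n}^*$. The starting point is the identity $(x^iy^jz^k)\pi\beta = (y+z^2)^i(x+y^2(y+z^2)^2)^j z^k$, from which one tracks each of the three weighted degrees factor by factor to bound $\deg_{(4,1,0)}((P)\pi\beta) \leq 16m = 4(4m)$, $\deg_{(4,0,1)}((P)\pi\beta) \leq 16m+n$, and $\deg_{(8,2,1)}((P)\pi\beta) \leq 32m+n$, matching the $\mathcal{P}_{4m,n}$ constraints. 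For the $\ldeg_2$ equality, one observes that the unique $\ldeg_2$-leading monomial $cy^{4m}z^n$ of $P$ (with $c \neq 0$) contributes a $cy^{16m}z^n$ term in the image, and no other $(i,j,k) \in \supp(P)$ with $i \geq 1$ can contribute $y$-degree $16m$ because of the constraint $4i+j \leq 4m$. The $\ldeg_3$ equality follows from a parallel argument leveraging the same $\ldeg_2$-leading monomial of $P$.

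For the composite $(\pi\beta^{-1})^3 \alpha \pi (\pi\beta)^3$, I would proceed in three stages. Starting with $P \in \mathcal{P}_{m,n}^*$, first apply $(\pi\beta)^3$; by iterating the first part, the result lies in $\mathcal{P}_{64m,n}^*$. Next, apply $\alpha\pi$, which does not in general preserve $\mathcal{P}^*$, and track the image in a suitable intermediate class, likely related to $\mathcal{Q}^*$ (which contains $\mathcal{P}^*$). Finally, three applications of $\pi\beta^{-1}$ bring the polynomial back to $\mathcal{P}^*$; the propositions from Section~\ref{s:5props} presumably handle the precise degree interactions at this last stage.

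The main obstacle is the middle stage: understanding the effect of $\alpha\pi$. The condition $\alpha \notin \mathcal{A}_4$ is precisely what ensures the leading terms produced by $\alpha\pi$ are in the right configuration for $(\pi\beta^{-1})^3$ to restore $\mathcal{P}^*$-membership. Given the nested structure $\mathcal{A} \supset \mathcal{A}_1 \supset \mathcal{A}_2 \supset \mathcal{A}_3 \supset \mathcal{A}_4$, I expect a case analysis on which stratum $\alpha$ belongs to: for each $\alpha \in \mathcal{A}_i \setminus \mathcal{A}_{i+1}$ with $i = 0, 1, 2, 3$, a distinct component of $\alpha$ drives the leading-term behavior. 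The case $\alpha \in \mathcal{A}_3 \setminus \mathcal{A}_4$ is likely the most delicate, since $\alpha$ then differs from an element of $\mathcal{A}_4$ only in the scaling parameter $u$ failing to satisfy $u^6 = 1$.
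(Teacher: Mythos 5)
Your treatment of the first two maps is sound and essentially reproduces the paper's route: the paper proves $(\mathcal{Q}_{m,n}^*)\pi\gamma\subset\mathcal{P}_{m,n}^*$ for any ``$\beta$-shaped'' $\gamma$ (Lemma~\ref{lem:PtoQ}), which combined with $\mathcal{P}_{m,n}^*\subset\mathcal{Q}_{4m,n}^*$ gives exactly your claim $(\mathcal{P}_{m,n}^*)\pi\beta^{\pm 1}\subset\mathcal{P}_{4m,n}^*$; your direct monomial-by-monomial computation is a legitimate substitute (one small point to tighten: all monomials $(0,4m,k)$ with $k\le n$ in $\supp(P)$, not just $(0,4m,n)$, produce $y$-degree $16m$ in the image, so you must argue that the largest resulting $z$-exponent is still $n$ and comes from $(0,4m,n)$ alone).

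The gap is in the third map. First, you have the composition order reversed: with the paper's right-action convention $(P)\phi\psi=((P)\phi)\psi$, the factor applied first is $(\pi\beta^{-1})^3$, not $(\pi\beta)^3$. More seriously, your middle stage --- ``apply $\alpha\pi$ and track the image in a suitable intermediate class, likely related to $\mathcal{Q}^*$, then three applications of $\pi\beta^{-1}$ restore membership'' --- cannot work as stated, because $(\mathcal{P}^*)\alpha\pi$ lies in no class of the form $\mathcal{Q}_{m',n'}^*$ for general $\alpha\in\mathcal{A}\pri\mathcal{A}_4$: e.g.\ for $\alpha$ interchanging $x$ and $z$, the image of the mandatory monomial $y^{4m}z^n$ acquires an $\ldeg_2$-leading term involving $x$, which is incompatible with the condition $\ldeg_2=(0,m',n')$. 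The actual mechanism, and the whole content of Propositions~\ref{prop:A0-A1}--\ref{prop:A3-A4}, is that membership in $\mathcal{Q}^*$ is established only after composing $\alpha$ with a \emph{stratum-dependent} number of $\pi\beta^{\pm1}$ factors on \emph{both} sides: $(\mathcal{P}^*)\alpha\beta\subset\mathcal{Q}^*$ for $\alpha\in\mathcal{A}_0\pri\mathcal{A}_1$, $(\mathcal{P}^*)\pi\beta^{-1}\alpha\beta\subset\mathcal{Q}^*$ for $\alpha\in\mathcal{A}_1\pri\mathcal{A}_2$, $(\mathcal{P}^*)\pi\beta^{-1}\alpha\beta\pi\beta\subset\mathcal{Q}^*$ for $\alpha\in\mathcal{A}_2\pri\mathcal{A}_3$, and $(\mathcal{P}^*)(\pi\beta^{-1})^2\alpha\beta\pi\beta\subset\mathcal{Q}^*$ for $\alpha\in\mathcal{A}_3\pri\mathcal{A}_4$; the leftover prefix factors preserve $\mathcal{P}^*$ by the first part, and the leftover suffix factors finish via $(\mathcal{Q}^*)\pi\beta\subset\mathcal{P}^*$ (Proposition~\ref{prop:PtoQ}). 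This interleaving --- which is precisely why the exponent $3$ appears in $(\pi\beta^{-1})^3\alpha\pi(\pi\beta)^3$ --- is the missing idea; you correctly anticipated the case split over $\mathcal{A}_i\pri\mathcal{A}_{i+1}$, but a decomposition of the form ``$\alpha\pi$ first, then repair afterwards'' does not match the statements you would need to invoke, and no uniform intermediate class exists at that point of the composition.
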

\begin{proof}
If $\gamma\in\{\beta,\beta^{-1}\}$ then $({\cal P}^*)\pi\gamma\subset{\cal P}^*$ by Proposition~\ref{prop:PtoQ}.  To show that $(\pi\beta^{-1})^3\al\pi(\pi\beta)^3$ preserves ${\mathcal P}^*$, we separately consider the four cases $\al\in{\cal A}_{i-1}\pri{\cal A}_i$ ($i\in\{1,2,3,4\}$).  In particular, we have:

\begin{enumerate}[1)]
\item If $\alpha\in{\cal A}_0\pri{\cal A}_1$ then $(\mathcal{P}^*) \alpha \beta \subset \mathcal{Q}^*$ (Proposition~\ref{prop:A0-A1}).
\item If $\al\in{\cal A}_1\pri{\cal A}_2$ then $({\cal P}^*)\pi\beta^{-1}\al\beta\subset{\cal Q}^*$ (Proposition~\ref{prop:A1-A2}).
\item If $\al\in{\cal A}_2\pri{\cal A}_3$ then $({\cal P}^*)\pi\beta^{-1}\al\beta\pi\beta\subset{\cal Q}^*$ (Proposition~\ref{prop:A2-A3}).
\item If $\al\in{\cal A}_3\pri{\cal A}_4$ then $({\cal P}^*)(\pi\beta^{-1})^2\al\beta\pi\beta\subset{\cal Q}^*$ (Proposition~\ref{prop:A3-A4}).
\end{enumerate}
But by Proposition \ref{prop:PtoQ}, $({\mathcal Q}^*) \pi \beta \subset {\mathcal P}^*$, so we can simply apply $\pi \beta$ once more to end up in ${\mathcal P}^*$.  Thus $({\mathcal P}^*)(\pi \beta ^{-1} )^3 \alpha \pi (\pi \beta)^3 \subset {\mathcal P}^*$.
\end{proof}

\begin{remark} If $\alpha \in {\cal A}_4$, then $(\pi\beta^{-1})^3\al\pi(\pi\beta)^3 = \pi \alpha$, which does not preserve $\mathcal{P}^*$ in general.  However, since $\mathcal{A}_4 \subset \mathcal{C}$, we will take advantage of the commutativity of these elements to push them out of the way.
\end{remark}

\begin{corollary}\label{cor:Pstable1}
Let $r\ge 1$ be an integer. Let $\al_0,\ldots,\al_{r}\in{\cal A}$, and set $\phi=\al_0\theta\al_1\cdots\theta\al_{r}$.
If $\al_1,\ldots,\al_{r-1}\in{\cal A}\pri {\cal A}_4$, then there exist $\al,\al'\in {\cal A}$ such that
$(y)\al\phi\al'\in{\cal P}^*$.
\end{corollary}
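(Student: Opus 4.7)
The plan is to choose $\al$ and $\al'$ so that $\al\phi\al'$ factors as a product in which each factor is shown, via Theorem \ref{thm:Pstable}, to preserve ${\cal P}^*$, applied to a starting polynomial $(y)\psi$ that already lies in ${\cal P}^*$ for the leftmost factor $\psi$.

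The key algebraic ingredient is the reformulation
\[
\theta = (\pi\beta)^N(\pi\beta^{-1})^N\pi.
\]
This follows from $(\pi\beta)^{-N} = (\beta^{-1}\pi)^N = \pi(\pi\beta^{-1})^N\pi$, a direct consequence of $\pi^2 = \id$ obtained by shifting the leading $\pi$ through $(\beta^{-1}\pi)^N$ and canceling $\pi\cdot\pi$ at the end. With this rewriting I take $\al := \al_0^{-1}$ and $\al' := \al_r^{-1}\pi$, so that $\al\phi\al' = \theta\al_1\theta\al_2\cdots\theta\pi$. Substituting the new expression for $\theta$ and collapsing the rightmost $\pi\cdot\pi$ yields
\[
\al\phi\al' = (\pi\beta)^N \cdot M_1 M_2 \cdots M_{r-1} \cdot (\pi\beta^{-1})^N, \qquad M_i := (\pi\beta^{-1})^N \pi\al_i (\pi\beta)^N,
\]
where the middle product is empty if $r = 1$.

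It then suffices to show each piece acts correctly. The example recorded in Section \ref{s:notations} gives $(x)\beta \in {\cal P}^*_{1,0}$, so $(y)\pi\beta = (x)\beta \in {\cal P}^*$; iterating the stability of $\pi\beta$ from Theorem \ref{thm:Pstable} yields $(y)(\pi\beta)^N \in {\cal P}^*$. The trailing $(\pi\beta^{-1})^N$ preserves ${\cal P}^*$ by iterating the stability of $\pi\beta^{-1}$. For each middle factor $M_i$, I write $\pi\al_i = (\pi\al_i\pi)\pi$ and, using $N \geq 3$, split
\[
M_i = (\pi\beta^{-1})^{N-3} \cdot \bigl[(\pi\beta^{-1})^3\,(\pi\al_i\pi)\,\pi\,(\pi\beta)^3\bigr] \cdot (\pi\beta)^{N-3}.
\]
A direct computation on the generic element $(u^2x, u^2y, uz)$ shows that every element of ${\cal A}_4$ commutes with $\pi$, so $\pi{\cal A}_4\pi = {\cal A}_4$; hence $\pi\al_i\pi \in {\cal A}\pri{\cal A}_4$ whenever $\al_i \in {\cal A}\pri{\cal A}_4$. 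The bracketed middle piece is then exactly the third form of generator in Theorem \ref{thm:Pstable} and preserves ${\cal P}^*$, while the outer $(\pi\beta^{\pm 1})^{N-3}$ factors preserve ${\cal P}^*$ by iteration.

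Chaining these stability statements, starting from $(y)(\pi\beta)^N \in {\cal P}^*$, yields $(y)\al\phi\al' \in {\cal P}^*$. The main substantive hurdle is spotting the reformulation $\theta = (\pi\beta)^N(\pi\beta^{-1})^N\pi$; after that, the factorization of $\al\phi\al'$ almost writes itself, and the hypothesis $N \geq 3$ is used precisely to extract the $(\pi\beta^{-1})^3\cdots(\pi\beta)^3$ sandwich needed to invoke the third case of Theorem \ref{thm:Pstable}.
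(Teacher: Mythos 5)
Your proof is correct and follows essentially the same route as the paper: both rewrite $\theta\pi=(\pi\beta)^N(\pi\beta^{-1})^N$, choose $\al=\al_0^{-1}$, $\al'=\al_r^{-1}\pi$, arrive at the identical factorization $(\pi\beta)^N\prod_i\bigl[(\pi\beta^{-1})^N\pi\al_i(\pi\beta)^N\bigr](\pi\beta^{-1})^N$, use that conjugation by $\pi$ preserves $\mathcal{A}_4$, and start from $(y)\pi\beta\in\mathcal{P}^*_{1,0}$ before invoking Theorem~\ref{thm:Pstable}. Your only addition is to make explicit the splitting $(\pi\beta^{-1})^{N-3}\bigl[(\pi\beta^{-1})^3(\pi\al_i\pi)\pi(\pi\beta)^3\bigr](\pi\beta)^{N-3}$ that the paper leaves implicit, which is a welcome clarification of where $N\ge 3$ is used.
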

\begin{proof}
We set $\theta^\prime=\theta\pi = (\pi\beta)^N (\pi \beta ^{-1})^N$;
$\alpha = \alpha _0 ^{-1}$; $\alpha ^\prime = \alpha _{r}^{-1}\pi$;
and $\alpha _i ^\prime = \pi \alpha _i  \pi $ for $1 \leq i \leq r-1$.  Note that since $\mathcal{A}_4$ is fixed under conjugation by $\pi$, $\alpha _1 ^\prime, \ldots, \alpha _{r-1} ^\prime \notin {\mathcal A}_4$.  Then we have
\begin{align*}
\alpha \phi \alpha ^\prime &=  \theta^\prime  \alpha _1 ^\prime \pi \theta^\prime \alpha _2^\prime \pi \cdots \theta^\prime \alpha _{r-1}^\prime \pi \theta^\prime \\
&=(\pi \beta) (\pi \beta) ^{N-1} \left( \prod _{i=1} ^{r-1} (\pi \beta ^{-1}) ^N \alpha _i ^\prime \pi (\pi \beta) ^N\right) (\pi \beta ^{-1}) ^N
\end{align*}
Since $(y)\pi \beta \in {\cal P}_{1,0}^{*}\subset{\cal P}^{*} $, we deduce $(y)\al\phi\al'\in{\cal P}^*$ by Theorem \ref{thm:Pstable}.
\end{proof}

\begin{corollary}\label{cor:Pstable2} Let $\phi\in\langle{\cal A},\theta\rangle\pri{\cal A}$.  Then there exist $\al,\al'\in {\cal A}$ such that
$(y)\al\phi\al'\in{\cal P}^*$.
\end{corollary}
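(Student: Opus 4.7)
The plan is to reduce to Corollary \ref{cor:Pstable1} by finding a suitable word representation of $\phi$. Since $\theta^2 = \id$ and $\phi \in \langle \mathcal{A}, \theta \rangle$, we can write
$$\phi = \alpha_0 \theta \alpha_1 \theta \cdots \theta \alpha_r$$
with $r \ge 0$ and $\alpha_0, \ldots, \alpha_r \in \mathcal{A}$. We choose such a representation with $r$ minimal. Since $\phi \notin \mathcal{A}$, minimality forces $r \ge 1$. If $r = 1$, then the hypothesis of Corollary \ref{cor:Pstable1} is vacuous and we are done immediately, so the real content is the case $r \ge 2$.

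The crux is to justify that the minimal representation automatically satisfies $\alpha_i \notin \mathcal{A}_4$ for every $1 \le i \le r-1$, so that Corollary \ref{cor:Pstable1} is applicable. For this I will use the observation (already noted in the paper) that $\mathcal{A}_4 \subset \mathcal{C}$, i.e.\ every element of $\mathcal{A}_4$ commutes with $\theta$. Suppose for contradiction that some middle $\alpha_i \in \mathcal{A}_4$. Then $\theta \alpha_i = \alpha_i \theta$, and combined with $\theta^2 = \id$ we get
$$\theta \alpha_i \theta = \alpha_i,$$
which allows us to rewrite
$$\cdots \theta \alpha_{i-1} \theta \alpha_i \theta \alpha_{i+1} \theta \cdots = \cdots \theta (\alpha_{i-1} \alpha_i \alpha_{i+1}) \theta \cdots,$$
producing an expression for $\phi$ with only $r-2$ occurrences of $\theta$ and contradicting the minimality of $r$. (In the boundary subcase $r = 2$, the rewritten expression gives $\phi = \alpha_0 \alpha_1 \alpha_2 \in \mathcal{A}$, which contradicts $\phi \notin \mathcal{A}$ instead.)

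Once the claim $\alpha_1, \ldots, \alpha_{r-1} \in \mathcal{A} \setminus \mathcal{A}_4$ is established, Corollary \ref{cor:Pstable1} applies directly and yields the desired $\alpha, \alpha' \in \mathcal{A}$ with $(y)\alpha \phi \alpha' \in \mathcal{P}^*$. There is no computational obstacle here; the only real step is the minimality/commutation reduction, and it is forced by the fact that $\mathcal{A}_4$ lies in the centralizer of $\theta$.
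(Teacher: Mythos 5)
Your proposal is correct and follows essentially the same route as the paper: write $\phi$ as a word in $\mathcal{A}$ and $\theta$, use the fact that $\mathcal{A}_4\subset\mathcal{C}$ commutes with the involution $\theta$ to shorten the word until no interior letter lies in $\mathcal{A}_4$ (the paper shortens over all of $\mathcal{C}$, which is the same argument), and then invoke Corollary~\ref{cor:Pstable1}. Your explicit treatment of the boundary case $r=2$ and of the vacuous case $r=1$ is a minor elaboration of what the paper leaves implicit.
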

\begin{proof}
Since $\theta^{-1}=\theta$ and $\phi\in\langle{\cal A},\theta\rangle$, we can write
$\phi = \alpha _0 \theta \alpha _1 \theta \cdots \alpha _r \theta \alpha _{r}$ for some $\alpha _0,\ldots,\alpha _{r} \in {\cal A}$.
If some $\alpha _i \in {\mathcal C}$ ($1 \leq i \leq r-1$), then we can shorten our sequence.  Thus, since $\phi \notin {\mathcal A}$, we may assume $\al_1,\ldots,\al_{r-1}\in{\cal A}\pri {\cal C}\subset {\cal A}\pri {\cal A}_4$ and $r\ge 1$, and hence the result follows from Corollary \ref{cor:Pstable1}
\end{proof}

\begin{corollary}\label{cor:com}
${\cal C}={\cal A}_4$.  In particular, ${\mathcal C}$ is a finite cyclic group of order $1,2,3$ or $6$.
\end{corollary}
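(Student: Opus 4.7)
The inclusion $\mathcal{A}_4 \subset \mathcal{C}$ was already verified at the end of Section~\ref{s:notations}, so my plan is to establish the reverse inclusion $\mathcal{C} \subset \mathcal{A}_4$ by contradiction, with Corollary~\ref{cor:Pstable1} playing the role of the main black box. The cyclic structure will then follow by directly identifying $\mathcal{A}_4$ with a group of roots of unity.

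Suppose for contradiction that some $\al \in \mathcal{C}\pri\mathcal{A}_4$ exists. I would apply Corollary~\ref{cor:Pstable1} with $r=2$ and $(\al_0,\al_1,\al_2) = (\id,\al,\id)$; the hypothesis $\al_1 \in \mathcal{A}\pri\mathcal{A}_4$ is exactly the assumption on $\al$. Since $\al$ commutes with $\theta$ and $\theta^2=\id$, the word $\phi = \theta\al\theta$ collapses to $\al\theta^2 = \al$. The corollary therefore produces $\al',\al''\in\mathcal{A}$ such that $(y)\al'\al\al'' \in \mathcal{P}^*$.

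This is the desired contradiction. On the one hand $\al'\al\al'' \in \mathcal{A}$, so $(y)\al'\al\al''$ is a polynomial of total degree at most $1$. On the other hand, every $P \in \mathcal{P}^*$ belongs to some $\mathcal{P}^*_{m,n}$ with $m\ge 1$, and the condition $\ldeg_2(P) = (0,4m,n)$ forces the monomial $y^{4m}z^n$ into $\supp(P)$, so $\deg P \ge 4m \ge 4$. Hence $\mathcal{C}\pri\mathcal{A}_4 = \emptyset$, and combined with the known reverse inclusion this gives $\mathcal{C} = \mathcal{A}_4$.

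For the structural claim I would exhibit the map $u \mapsto (u^2x, u^2y, uz)$ from $\mu_6(\Ik) := \{u\in\Ik^* \mid u^6=1\}$ to $\mathcal{A}_4$. A direct composition calculation shows it is a group homomorphism, and it is injective because $u$ can be read off the third coordinate. This identifies $\mathcal{C}$ with a finite subgroup of $\Ik^*$, which, by the standard fact that every finite subgroup of the multiplicative group of a field is cyclic, must be cyclic of order dividing $6$. Since all the genuine technical work is already bundled into Corollary~\ref{cor:Pstable1}, I do not expect any real obstacle; the only cleverness needed is the choice of a minimal length-two word in $\theta$ and $\al$ that collapses to an affine map while still being long enough to invoke the corollary.
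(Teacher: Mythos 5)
Your proposal is correct and follows essentially the same route as the paper: a contradiction via Corollary~\ref{cor:Pstable1} with $r=2$, $\al_0=\al_2=\id$, $\al_1=\al$, collapsing $\theta\al\theta$ to $\al$ by commutativity, and then observing that no affine image of $y$ can lie in $\mathcal{P}^*$ (the paper derives this last contradiction from $\ldeg_2$ rather than total degree, but both rest on the forced monomial $y^{4m}z^n$). Your explicit verification of the cyclic structure via the isomorphism with $\mu_6(\Ik)$ is a welcome addition that the paper leaves implicit.
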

\begin{proof}
We noted above that ${\cal A}_4\subset{\cal C}$, so we are left to prove the opposite containment.  Suppose for contradiction that there exists $\rho\in{\cal C}\pri{\cal A}_4$.
By Corollary~\ref{cor:Pstable1} (with $r=2$, $\al_0=\al_2={\rm id}$ and $\al_1=\rho$),
there exist $\al,\al'\in {\cal A}$ such that $(y)\al\theta\rho\theta\al'\in{\cal P}^*$.
Since $\rho$ commutes with the involution $\theta$, we have $\al\theta\rho\theta\al'=\al\rho\al'\in{\cal A}$.
This is a contradiction, as for any $\gamma \in {\mathcal A}$, $\ldeg _2 ( (y)\gamma) \leq_2 (0,1,0) <_2 (0,4m,n)$ for any $m \geq 1$, $n \geq 0$ and thus $(y)\gamma \notin {\mathcal P}^*$.
\end{proof}

\begin{corollary}
Let $\phi\in \TA_3(\Ik)$ be a tame automorphism with $\deg_{(1,1,1)}((f)\phi)\le 5$ for all $f \in \Ik[\bx]$ with $\deg _{(1,1,1)} (f) = 1$.  If $\phi \notin \mathcal{A}$, then $\phi\not\in\langle{\cal A},\theta\rangle$.  In particular, $ \langle \mathcal{A}, \theta \rangle$ is a proper subgroup of $\mathcal{T}$.
\end{corollary}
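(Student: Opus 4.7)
The plan is to argue by contrapositive: assume $\phi\in\langle{\cal A},\theta\rangle\pri{\cal A}$ and show that $\phi$ must send some linear polynomial to a polynomial of total degree at least $6$, directly contradicting the bound $\deg_{(1,1,1)}((f)\phi)\le 5$.

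First I would invoke Corollary~\ref{cor:Pstable2} to obtain $\alpha,\alpha'\in{\cal A}$ such that $(y)\alpha\phi\alpha'\in{\cal P}^*$. Since $\alpha$ is affine, $f:=(y)\alpha$ satisfies $\deg_{(1,1,1)}(f)=1$, so by hypothesis $\deg_{(1,1,1)}((f)\phi)\le 5$. Composing with the affine $\alpha'$ amounts to substituting degree-one polynomials for each variable, which cannot increase the total degree, so $\deg_{(1,1,1)}((y)\alpha\phi\alpha')\le 5$ as well.

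Next I extract the contradiction from the defining conditions of ${\cal P}^*$. If $P\in{\cal P}^*_{m,n}$ for some $m\ge 1$, $n\ge 0$, then $\ldeg_3(P)=(0,2m,4m+n)\in\supp(P)$, so $P$ contains the monomial $y^{2m}z^{4m+n}$ with nonzero coefficient. Its total degree is $6m+n\ge 6$, hence $\deg_{(1,1,1)}(P)\ge 6$ for every $P\in{\cal P}^*$. This contradicts the upper bound of $5$ established above, proving that $\phi\notin\langle{\cal A},\theta\rangle$.

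For the ``In particular'' clause I would exhibit a concrete tame, non-affine automorphism satisfying the hypothesis. Derksen's triangular automorphism $\sigma=(x+y^2,y,z)\in{\cal B}\pri{\cal A}$ sends every linear polynomial to a polynomial of total degree at most $2\le 5$, so by the first part $\sigma\notin\langle{\cal A},\theta\rangle$. Therefore $\langle{\cal A},\theta\rangle$ is a proper subgroup of $\TA_3(\Ik)$. I do not anticipate any serious obstacle here: the substantial content is already encapsulated in Corollary~\ref{cor:Pstable2}, and the only observation required is that the rigid condition $\ldeg_3(P)=(0,2m,4m+n)$ with $m\ge 1$ forces a built-in total-degree gap that the hypothesis explicitly precludes.
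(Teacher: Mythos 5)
Your argument is correct and follows essentially the same route as the paper: invoke Corollary~\ref{cor:Pstable2}, use affineness of $\al,\al'$ to transfer the degree bound to $(y)\al\phi\al'$, and derive the contradiction $5\ge 6m+n\ge 6$ from the monomial $y^{2m}z^{4m+n}$ forced by $\ldeg_3$. The paper even records the same witness $(x+y^2,y,z)$ for the ``in particular'' clause as a separate example.
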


\begin{proof}
Suppose for contradiction that $\phi\in\langle{\cal A},\theta \rangle$. Applying Corollary~\ref{cor:Pstable2} to $\phi$, there exist $\al,\al'\in {\cal A}$ such that $(y)\al\phi\al'\in{\cal P}^*$. Since $\deg _{(1,1,1)} (y) \alpha = 1$, by assumption $\deg _{(1,1,1)} ( (y)\alpha \phi) \leq 5$, and so $\deg _{(1,1,1)} ( (y)\alpha \phi \alpha ^\prime ) \leq 5$ (since $\alpha ^\prime$ is affine).
But if $P=(y)\alpha \phi \alpha ^\prime \in{\cal P}^*$, there must exist integers $m\ge 1$ and $n\ge 0$ such that $P\in{\cal P}_{m,n}^*$,
in which case $(0,2m,4m+n)\in\supp(P)$.  Thus $5 \geq \deg_{(1,1,1)}(P)\ge 6m+n\ge 6$, a contradiction.  So we must have $\phi \not \in\langle{\cal A},\theta \rangle$.

\end{proof}

\begin{example} $(x+y^2,y,z) \in \TA_3(\Ik) \setminus \langle \mathcal{A}, \theta\rangle$.
\end{example}

\begin{corollary}
The group $\langle{\cal A},\theta\rangle$ is the amalgamated free product of ${\cal A}$ and
$\langle{\cal C},\theta\rangle$ along their intersection ${\cal C}$.
\end{corollary}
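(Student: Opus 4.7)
The strategy is to apply the standard criterion that $\langle \mathcal{A}, \theta \rangle$ equals the amalgamated free product $\mathcal{A} *_{\mathcal{C}} \langle \mathcal{C}, \theta \rangle$ if and only if (a) $\mathcal{A} \cap \langle \mathcal{C}, \theta \rangle = \mathcal{C}$ and (b) every reduced alternating word in the two factors, of length at least one, represents a nontrivial element of $\langle \mathcal{A}, \theta \rangle$. Since $\mathcal{C} = \mathcal{A}_4$ (Corollary~\ref{cor:com}) commutes elementwise with the involution $\theta$, one sees immediately that $\langle \mathcal{C}, \theta \rangle = \mathcal{C} \sqcup \mathcal{C}\theta$; intersecting with $\mathcal{A}$ and using $\theta \notin \mathcal{A}$ gives $\mathcal{A} \cap \langle \mathcal{C}, \theta \rangle = \mathcal{C}$, settling (a), and in particular identifying $\langle \mathcal{C}, \theta \rangle \setminus \mathcal{C} = \mathcal{C}\theta$.

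For (b), let $w = g_1 g_2 \cdots g_n$ be any reduced alternating word, with each $g_i$ in $\mathcal{A} \setminus \mathcal{C}$ or in $\mathcal{C}\theta$, alternating between the two. Writing every $\mathcal{C}\theta$-factor as $c_i \theta$ and using $c_i \theta = \theta c_i$, I would absorb each $c_i$ into the $\mathcal{A}$-factor immediately to its right (inserting an identity $\mathcal{A}$-factor at the start if $w$ begins in $\mathcal{C}\theta$, and taking a trailing $c_r$ as a final $\mathcal{A}$-factor). This rewrites $w$ in the shape $a_0 \theta a_1 \theta \cdots \theta a_r$ with $a_i \in \mathcal{A}$ and $r \geq 0$. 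Because left-multiplication by an element of $\mathcal{C}$ preserves membership in $\mathcal{A} \setminus \mathcal{C}$, all interior factors $a_1, \ldots, a_{r-1}$ lie in $\mathcal{A} \setminus \mathcal{C} = \mathcal{A} \setminus \mathcal{A}_4$.

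If $r = 0$ then necessarily $n = 1$ and $w = a_0 \in \mathcal{A} \setminus \mathcal{C}$, so in particular $w \neq \mathrm{id}$. Otherwise, Corollary~\ref{cor:Pstable1} applies directly and yields $\alpha, \alpha' \in \mathcal{A}$ with $(y)\alpha w \alpha' \in \mathcal{P}^*$. Any such polynomial $P \in \mathcal{P}_{m,n}^*$ has $(0, 2m, 4m+n) \in \supp(P)$ with $6m + n \geq 6$, hence $\deg_{(1,1,1)}(P) \geq 6$; on the other hand, $w = \mathrm{id}$ would force $(y)\alpha \alpha'$ to have total degree $1$, a contradiction.

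The substantive work was already carried out in Theorem~\ref{thm:Pstable} and its corollaries, so the only real obstacle here is the bookkeeping step of converting an arbitrary reduced alternating word into the normal form $a_0 \theta a_1 \theta \cdots \theta a_r$ with interior entries outside $\mathcal{A}_4$, which is precisely the hypothesis that Corollary~\ref{cor:Pstable1} consumes.
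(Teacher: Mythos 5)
Your proposal is correct and follows essentially the same route as the paper: absorb each $\mathcal{C}$-part of a $\langle\mathcal{C},\theta\rangle$-factor into the adjacent affine factor via the commutativity $c\theta=\theta c$, reduce to the normal form consumed by Corollary~\ref{cor:Pstable1}, and rule out triviality because elements of $\mathcal{P}^*$ cannot be images of $y$ under an affine map. The only (harmless) difference is that you explicitly verify $\mathcal{A}\cap\langle\mathcal{C},\theta\rangle=\mathcal{C}$ and conclude $w\neq\mathrm{id}$ via total degree, whereas the paper leaves the intersection implicit and concludes $\phi\notin\mathcal{A}$ via $\ldeg_2$.
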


\begin{proof}
Let $r\ge 1$ be an integer.  Let $\al_1,\ldots,\al_{r-1}\in{\cal A}\pri\langle{\cal C},\theta\rangle$
and $\rho_1,\ldots,\rho_{r}\in\langle{\cal C},\theta\rangle\pri{\cal A}$.
Set $\phi:=\rho_1\al_1\ldots\al_{r-1}\rho_{r}$.  Since it is clear that $\mathcal{A}$ and $\langle \mathcal{C}, \theta \rangle$ generate $\langle \mathcal{A}, \theta \rangle$, it suffices to check that $\phi \notin \mathcal{C}$.
Using the fact that $\theta$ is an involution, we can write $\rho_i=\theta c_i$ for some $c_i\in{\cal C}={\mathcal A}_4$  for each $1\le i\le r$.
By  Corollary~\ref{cor:Pstable1}
(with $\al'_0={\rm id}$, $\al'_i=c_i\al_i \in \mathcal{A} \setminus \mathcal{C} = \mathcal{A} \setminus \mathcal{A}_4$ for all $1\le i\le r-1$ and $\al'_r=c_r$),
there exist $\al,\al'\in {\cal A}$ such that $(y)\al\phi\al'\in{\cal P}^*$. As in the proof of Corollary \ref{cor:com}, this implies $\phi\not\in{\cal A}$ and hence $\phi \notin {\mathcal C}$ as required.
\end{proof}


The Main Theorem in the introduction is a direct consequence of these last two corollaries.

\section{Proofs of the five propositions}\label{s:5props}

\hspace*{.3cm} The technical details necessary to prove Theorem \ref{thm:Pstable}, namely Propositions \ref{prop:PtoQ}, \ref{prop:A0-A1}, \ref{prop:A1-A2}, \ref{prop:A2-A3}, and \ref{prop:A3-A4}, are contained in this section.  The basic idea is to understand the actions of various automorphisms on $\cal P^*$ and $\cal Q^*$.  First, we show in section \ref{secQP} that the map $\pi \beta$ behaves very nicely in this respect (in particular, it preserves $\cal P^*$).  The subsequent two sections study how affine automorphisms affect things.  The essential idea is that an affine map can distort $\cal P^*$ some, but this can be rectified by subsequent applications of $\pi$ and/or $\beta$.  For technical reasons, we treat triangular affine maps separately in the final section, and non-triangular affine maps in section \ref{nontriangularAffine}.

\subsection{From $\mathcal{Q}_{m,n}^{*}$ to $\mathcal{P}_{m,n}^*$} \label{secQP}

\begin{definition}
Let $\gamma \in \GA_3(\Ik)$, and set $w_1=(4,1,0)$, $w_2=(4,0,1)$, and $w_3=(8,2,1)$. $\gamma$ is called {\em $\beta$-shaped} if $\deg _{w_i} \gamma = \deg _{w_i} \beta$ and $\ldeg _i \gamma = \ldeg _i \beta$ for each $i=1,2,3$.
\end{definition}

\begin{example}
$\beta$ and $\beta ^{-1}=(x-y^2(y-z^2)^2, y-z^2,z)$ are both $\beta$-shaped.
\end{example}

\begin{lemma}\label{lem:PtoQ}
Let $m\ge 1$ and $n\ge 0$ be integers. If $\gamma \in \mathcal{G}$ is $\beta$-shaped, then $(\mathcal{Q}_{m,n}^{*}) \pi\gamma \subset \mathcal{P}_{m,n}^*$.
\end{lemma}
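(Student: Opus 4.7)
My plan is to trace the action of $\pi\gamma$ on each monomial of $P \in \mathcal{Q}_{m,n}^*$ and verify all five defining conditions of $\mathcal{P}_{m,n}^*$ via the elementary observation on degrees of compositions. I interpret the $\beta$-shaped hypothesis componentwise, which gives $\deg_{w_i}(\gamma_j)=\deg_{w_i}(\beta_j)=(w_i)_j$ and $\ldeg_i(\gamma_j)=\ldeg_i(\beta_j)$ for all $i,j \in \{1,2,3\}$; in particular $(\ldeg_2(\gamma_1),\ldeg_2(\gamma_2),\ldeg_2(\gamma_3)) = ((0,4,0),(0,1,0),(0,0,1))$ and $(\ldeg_3(\gamma_1),\ldeg_3(\gamma_2),\ldeg_3(\gamma_3)) = ((0,2,4),(0,0,2),(0,0,1))$, as matches the example $\gamma=\beta$.

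Since $\pi$ swaps $x$ and $y$, a monomial $x^a y^b z^c$ with $(a,b,c) \in \supp(P) \subset Q_{m,n}$ becomes $x^b y^a z^c$, whose image under $\gamma$ is $\gamma_1^b \gamma_2^a \gamma_3^c$. Its $w_i$-degree is therefore bounded by $(b,a,c)\cdot w_i$, and using the $Q_{m,n}$-inequalities $a+b\leq m$ and $3a+3b+c\leq 3m+n$, straightforward arithmetic gives
\[
4b+a \leq 4(a+b) \leq 4m, \qquad 4b+c \leq b - 3a + 3m + n \leq 4m+n, \qquad 8b+2a+c \leq 5b-a+3m+n \leq 8m+n,
\]
which are precisely the three inequalities defining $P_{m,n}$. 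Applying the elementary observation monomial by monomial yields the three weighted-degree conditions of $\mathcal{P}_{m,n}$ for $(P)\pi\gamma$.

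For the two star conditions, the $\ldeg_2$-contribution from the monomial indexed by $(a,b,c)$ is $b(0,4,0)+a(0,1,0)+c(0,0,1)=(0,4b+a,c)$, and the $\ldeg_3$-contribution is $b(0,2,4)+a(0,0,2)+c(0,0,1)=(0,2b,4b+2a+c)$. A direct case analysis using the $Q_{m,n}$-inequalities shows that both the $>_2$-maximum of $(0,4b+a,c)$ and the $>_3$-maximum of $(0,2b,4b+2a+c)$ over $(a,b,c)\in\supp(P)$ are \emph{uniquely} attained at $(a,b,c)=(0,m,n)$, yielding the target values $(0,4m,n)$ and $(0,2m,4m+n)$ respectively. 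The crucial point is that $(0,m,n) \in \supp(P)$ by the star condition $\ldeg_2(P)=(0,m,n)$, and the uniqueness argument in each case reduces to the observations that $4b+a=4(a+b)-3a \leq 4m$ with equality only at $(a,b)=(0,m)$, and that $4b+2a+c \leq (3a+3b+c)+(b-a) \leq (3m+n)+m$ with equality forcing $a=0$, $b=m$, $c=n$.

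The main subtlety I anticipate is ensuring that these leading terms survive the summation over all monomials of $P$. This follows because the unique maximizing contribution arises from the monomial $y^m z^n$ of $P$ (with nonzero coefficient by the star condition), whose image under $\pi\gamma$ is $\gamma_1^m \gamma_3^n$ with $>_i$-leading coefficient equal to a product of leading coefficients of $\gamma_1$ and $\gamma_3$ in $\Ik^*$. Hence the leading term is not cancelled by lower-order contributions from other monomials, giving $\ldeg_2((P)\pi\gamma)=(0,4m,n)$ and $\ldeg_3((P)\pi\gamma)=(0,2m,4m+n)$, and thus $(P)\pi\gamma \in \mathcal{P}_{m,n}^*$.
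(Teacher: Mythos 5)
Your overall strategy is exactly the paper's: push each monomial $x^ay^bz^c$ of $P$ through $\pi\gamma$ to obtain $X^bY^aZ^c$ (writing $\gamma=(X,Y,Z)$), bound its weighted and lexicographic degrees using the two inequalities defining $Q_{m,n}$, and observe that the two lexicographic maxima are attained at the unique point $(a,b,c)=(0,m,n)$, which lies in $\supp(P)$ by the star condition, so that no cancellation can destroy the leading terms. Your treatment of $\ldeg_2$, $\ldeg_3$, and the weights $w_1=(4,1,0)$ and $w_3=(8,2,1)$ is correct and coincides with the paper's computation.

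There is, however, an error in your componentwise reading of the $\beta$-shaped hypothesis: it is not true that $\deg_{w_i}(\beta_j)=(w_i)_j$ for all $i,j$. Indeed $\deg_{(4,0,1)}(\beta_2)=\deg_{(4,0,1)}(y+z^2)=2$, whereas $(w_2)_2=0$. Consequently the $w_2$-degree of $X^bY^aZ^c$ is $4b+2a+c$, and your claimed bound $(b,a,c)\cdot w_2=4b+c$ is \emph{not} an upper bound for it whenever $a>0$; the displayed chain $4b+c\le b-3a+3m+n\le 4m+n$ therefore bounds the wrong quantity and does not establish the condition $\deg_{(4,0,1)}((P)\pi\gamma)\le 4m+n$. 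The lemma is unharmed, since the correct quantity still satisfies $4b+2a+c\le 4a+4b+c=(a+b)+(3a+3b+c)\le m+(3m+n)=4m+n$, which is precisely the estimate in the paper's proof; but you must replace the false identity $\deg_{w_i}(\gamma_j)=(w_i)_j$ by the actual table of degrees of the components of $\beta$ (in which $\deg_{(4,0,1)}(Y)=2$) before the $w_2$ step goes through.
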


\begin{proof}
Let $P\in\mathcal{Q}_{m,n}^{*}$, and write $\gamma = (X,Y,Z)$.  Then since $\gamma$ is $\beta$-shaped, the degrees of $X$, $Y$, and $Z$ are given in the following table:
$$
\begin{array}{|c|c|c|c|c|c|c|c|c|}
  \hline
    & \deg_{(4,1,0)} & \deg_{(4,0,1)} & \deg_{(8,2,1)} & {\rm ldeg}_1 & {\rm ldeg}_2 & {\rm ldeg}_3 \\
  \hline
  X & 4       & 4       & 8       & (1,0,0) & (0,4,0) & (0,2,4) \\
  \hline
  Y & 1       & 2       & 2       & (0,1,0) & (0,1,0) & (0,0,2) \\
  \hline
  Z & 0       & 1       & 1       & (0,0,1) & (0,0,1) & (0,0,1) \\
  \hline
\end{array}
$$

Since $P \in \mathcal{Q}_{m,n}^*$, then for all $v=(i,j,k)\in\supp(P)$, we have $i+j\le m$, $3i+3j+k\le 3m+n$
and $(\bx^v)\pi\gamma=Y^{i}X^{j}Z^{k}$. We deduce:
\begin{align*}
\deg_{(4,1,0)}((\bx^v)\pi\gamma)&=i+4j\le 4(i+j)\le 4m,\\
\deg_{(4,0,1)}((\bx^v)\pi\gamma)&=2i+4j+k\le (i+j)+(3i+3j+k)\le m+3m+n=4m+n,\\
\deg_{(8,2,1)}((\bx^v)\pi\gamma)&=2i+8j+k\le 5(i+j)+(3i+3j+k)\le 5m+3m+n=8m+n,\\
{\rm ldeg}_2((\bx^v)\pi\gamma)&=(0,i+4j,k)\le_2 (0,4m,n),\\
{\rm ldeg}_3((\bx^v)\pi\gamma)&=(0,2j,2i+4j+k)\le_3 (0,2m,4m+n).
\end{align*}
We note that each of these last two inequalities is an equality if and only if $(i,j,k)=(0,m,n)$ which belongs to $\supp(P)$.  Thus $(P)\pi \gamma \in \mathcal{P}_{m,n}^*$.
\end{proof}

Applying this to $\beta$ and $\beta ^{-1}$, and recalling that $\mathcal{P}^* \subset \mathcal{Q}^*$, we have
\begin{proposition}\label{prop:PtoQ}
If $\gamma\in\{\beta,\beta^{-1}\}$ then $({\cal Q}^*)\pi\gamma\subset{\cal P}^*$ and $({\cal P}^*)\pi\gamma\subset{\cal P}^*$.
\end{proposition}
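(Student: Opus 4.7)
The plan is to extract this proposition as a direct corollary of Lemma~\ref{lem:PtoQ}, with essentially two bookkeeping steps: confirming both $\beta$ and $\beta^{-1}$ satisfy the hypothesis of the lemma (being $\beta$-shaped), and combining the lemma's conclusion with the already-noted inclusion $\mathcal{P}^* \subset \mathcal{Q}^*$.

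First I would check that both $\beta$ and $\beta^{-1}$ are $\beta$-shaped. For $\beta = (x+y^2(y+z^2)^2,\ y+z^2,\ z)$, the required values $\deg_{w_i}$ and $\ldeg_i$ of the first coordinate are exactly the computations recorded in the earlier Example on $P = x+y^2(y+z^2)^2$, and the degrees of $y+z^2$ and $z$ are immediate. Since $\beta$ is a triangular involution, one computes $\beta^{-1} = (x - y^2(y-z^2)^2,\ y-z^2,\ z)$. The supports (and hence all weighted and lexicographic degrees) of the three coordinate polynomials are identical to those of $\beta$ up to signs, so the same table of degrees in the proof of Lemma~\ref{lem:PtoQ} applies verbatim; thus $\beta^{-1}$ is $\beta$-shaped as well (this is the Example immediately preceding the lemma).

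Given this, Lemma~\ref{lem:PtoQ} applied to $\gamma \in \{\beta,\beta^{-1}\}$ yields $(\mathcal{Q}_{m,n}^*)\pi\gamma \subset \mathcal{P}_{m,n}^*$ for all $m \geq 1$ and $n \geq 0$. Taking unions over all such $(m,n)$ gives $(\mathcal{Q}^*)\pi\gamma \subset \mathcal{P}^*$, which is the first inclusion. For the second, I would invoke the containment $\mathcal{P}_{m,n}^* \subset \mathcal{Q}_{4m,n}^*$ noted just after the definitions of these sets, which yields $\mathcal{P}^* \subset \mathcal{Q}^*$; composing gives $(\mathcal{P}^*)\pi\gamma \subset (\mathcal{Q}^*)\pi\gamma \subset \mathcal{P}^*$.

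There is no real obstacle here: all the nontrivial content lives in Lemma~\ref{lem:PtoQ}. The only point requiring a moment's care is writing down $\beta^{-1}$ explicitly to see that it too is $\beta$-shaped; once that is observed, the proposition is a one-line consequence of the lemma plus the inclusion $\mathcal{P}^* \subset \mathcal{Q}^*$.
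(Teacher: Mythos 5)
Your proof is correct and follows exactly the paper's route: the paper likewise deduces the proposition by applying Lemma~\ref{lem:PtoQ} to the $\beta$-shaped maps $\beta$ and $\beta^{-1}$ and recalling the inclusion $\mathcal{P}^* \subset \mathcal{Q}^*$. Nothing is missing.
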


\subsection{The non triangular case}\label{nontriangularAffine}

The following technical lemma is necessary to prove Proposition \ref{prop:A0-A1}.

\begin{lemma}
Let $(a,b,c)\in\IN^3\pri\{(0,0,0)\}$. We set $f(v)=ai+bj+ck$ for $v=(i,j,k)\in P_{m,n}$.
Set $m'=\max\{f(v)\ |\ v\in P_{m,n}\}$.
\begin{enumerate}[1)]
\item  If $b>\max\{{a\over 4},2c\}$ and $c\ne 0$ then $f(v)=m'$ if and only if $v=(0,4m,n)$.
\item If $b>{a\over 4}$ and $c=0$ then $f(v)=m'$ if and only if $v=(0,4m,d)$ with $0\le d\le n$.
\item If $c>\max\{{b\over 2},{a-2b\over 4}\}$ and $b\ne 0$ then $f(v)=m'$ if and only if $v=(0,2m,4m+n)$.
\item If $c>{a\over 4}$ and $b=0$ then $f(v)=m'$ if and only if $v=(0,d,4m+n)$ with $0\le d\le 2m$.
\item If $c=\frac{a-2b}{4} > \frac{b}{2}$, then $f(v)=m^\prime$ if and only if $v=(m-d,2d,4d+n)$ with $0 \leq d \leq m$.
\end{enumerate}
\end{lemma}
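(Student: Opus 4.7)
My plan is to treat this as a linear program. First I would let $\mathcal{R}_{m,n} \subset \IR^3_{\geq 0}$ denote the real polytope defined by the same inequalities as $P_{m,n}$, and observe (cf.\ Figure~\ref{figp}) that $\mathcal{R}_{m,n}$ has exactly seven vertices, all of which are integral: the origin together with $(m,0,0)$, $(m,0,n)$, $(0,4m,0)$, $(0,4m,n)$, $(0,0,4m+n)$, and $(0,2m,4m+n)$. Since $f$ is linear with nonnegative coefficients, $m'=\max\{f(v):v\in P_{m,n}\}$ coincides with the maximum of $f$ on $\mathcal{R}_{m,n}$ and is attained on a face of $\mathcal{R}_{m,n}$ (never at the origin). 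So the next step is the routine tabulation
\begin{align*}
f(m,0,0)    &= ma,      & f(m,0,n)     &= ma + nc, \\
f(0,4m,0)   &= 4mb,     & f(0,4m,n)    &= 4mb + nc, \\
f(0,0,4m+n) &= (4m+n)c, & f(0,2m,4m+n) &= 2mb + (4m+n)c.
\end{align*}

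Then, in each of the five cases I would make pairwise comparisons among these six values. In cases 1 and 3 the hypotheses single out a unique winning vertex, namely $(0,4m,n)$ and $(0,2m,4m+n)$ respectively, with dominance over each competitor coming from exactly one of the strict inequalities in the hypothesis. In cases 2 and 4 the degeneracy $c=0$ (resp.\ $b=0$) makes $f$ independent of $k$ (resp.\ $j$), so two adjacent vertices tie and the maximum automatically fills the edge between them. In case 5 the identity $a=2b+4c$ yields $f(m,0,n)=f(0,2m,4m+n)$, and intersecting the two facets $4i+k=4m+n$ and $8i+2j+k=8m+n$ parametrizes the edge between these vertices exactly as $(m-d,2d,4d+n)$ for $0\le d\le m$.

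To promote these vertex-level equalities and inequalities to the full uniqueness claim, I would at each claimed maximizing vertex enumerate its emanating edges by relaxing one defining tight facet at a time, and check that $f$ strictly decreases along every non-maximal edge. At $(0,4m,n)$ in case~1, for example, the three edges run in directions $(1,-4,0)$, $(0,-1,2)$, $(0,0,-1)$ with $f$-changes $a-4b$, $-b+2c$, $-c$, each strictly negative by hypothesis; case~3 is handled identically at $(0,2m,4m+n)$. The main technical subtlety will be case~5, where $(m,0,n)$ is a non-simple vertex with \emph{four} emanating edges (to $(0,4m,n)$, $(0,2m,4m+n)$, $(0,0,4m+n)$, and $(m,0,0)$); the transverse edge to $(0,0,4m+n)$ in direction $(-1,0,4)$ has $f$-change $-a+4c=-2b$. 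This is strictly negative precisely when $b>0$, which reveals an implicit hypothesis $b>0$ in case~5 (without it the maximum would spread to a $2$-dimensional face), and is the one point I would flag explicitly in the write-up.
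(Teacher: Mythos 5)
Your proposal is correct and follows essentially the same route as the paper: both arguments reduce the maximization of the linear form $f$ to the vertices of the polytope $\conv S_1 = \conv P_{m,n}$ and then determine the face on which the maximum is attained; the paper does the uniqueness step by intersecting the level plane $E=\{f=m'\}$ with the polytope, while you check that $f$ strictly decreases along each edge emanating from the claimed optimal vertex, which is an equivalent (and if anything more systematic) bookkeeping of the same geometry.

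Your flag on case~5 is a genuine catch, not a quibble. The hypothesis $c=\frac{a-2b}{4}>\frac{b}{2}$ does not exclude $b=0$ (e.g.\ $(a,b,c)=(4,0,1)$), and in that situation $f=c(4i+k)$ attains its maximum on the entire triangular facet $4i+k=4m+n$, which contains $(0,0,4m+n)$; so the ``only if'' direction of case~5 fails. The paper's own proof of case~5 asserts that $E\cap\conv P_{m,n}$ is the segment from $(m,0,n)$ to $(0,2m,4m+n)$ without checking that $(0,0,4m+n)\notin E$, precisely the point your edge in direction $(-1,0,4)$ with $f$-change $-2b$ detects. Note that the triple $(a,b,c)=(4,0,1)$ actually arises in the paper's application (the subcase $(i',j',k')=(i,k,j)$ in Case~3 of Proposition~\ref{prop:A0-A1}), but only the ``if'' direction $f(0,2m,4m+n)=m'$ is used there, so nothing downstream breaks; still, the lemma as stated should carry the extra hypothesis $b\ne 0$ in case~5, exactly as you propose.
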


\begin{proof} Let ${\rm conv}$ denote the convex hull in $\IR^3$.  Define $S_1, S_2 \in P_{m,n}$ by
\begin{align*}
S_1=\{ &(0,0,0), (m,0,0), (0,4m,0), (m,0,n), \\ &(0,4m,n), (0,2m,4m+n), (0,0,4m+n)\} \\
S_2=\{&(m,0,n), (0,4m,n), (0,2m,4m+n)\}
\end{align*}
Note that $S_2 \subset S_1$.  It is easy to check (see Figure \ref{figp}) that ${\rm conv}\ P_{m,n} = {\rm conv}\ S_1$ (and in fact, $P_{m,n} = \left( {\rm conv}\ S_1\right)  \cap \IN^3$).  Then, since $f$ is a linear form and $a,b,c \geq 0$ , we have
$$ m^\prime = \max\{f(v)\ |\ v\in {\rm conv}\ P_{m,n}\} = \max\{f(v)\ |\ v\in S_1\} = \max\{f(v)\ |\ v\in S_2\}$$
We deduce
$$m'=\max\{am+cn,4bm+cn,2bm+4cm+cn\}=m\left(\max\{a,4b,2b+4c\}\right)+cn.$$

Cases 1) and 2): If $b>\max\{{a\over 4},2c\}$ then $m^\prime=4bm+cn$, and $E:=\{v\in \IR^3\ | \ f(v)=m'\}$  is a plane which contains $(0,4m,n)$ and neither $(m,0,n)$ nor $(0,2m,4m+n)$.  Since ${\rm conv}\ P_{m,n} = {\rm conv}\ S_1$ and $E$ is not parallel to any face of ${\rm conv}\ S_1$, then $E \cap {\rm conv}\ P_{m,n}$ must be either a single point or an edge in $S_1$; thus we must either have $E \cap P_{m,n}=\{(0,4m,n)\}$ or $E \cap P_{m,n} = \{ (0,4m, d)\ |\ 0 \leq d \leq n\}$.  It is easy to check that the former case happens precisely when $c \neq 0$, and the latter when $c=0$.

Cases 3) and 4): If $c > \max\{ \frac{b}{2}, \frac{a-2b}{4}\}$, then $m^\prime=2bm+4cm+cn$, and $E:=\{v\in \IR^3\ | \ f(v)=m'\}$  is a plane which contains $(0,2m,4m+n)$ and neither $(m,0,n)$ nor $(0,4m,n)$.  Since ${\rm conv}\ P_{m,n} = {\rm conv}\ S_1$and $E$ is not parallel to any face of ${\rm conv}\ S_1$, then $E \cap {\rm conv}\ P_{m,n}$ must be either a single point in $S_1$, or a line segment connecting two points in $S_1$; thus we must either have $E \cap P_{m,n}=\{(0,2m,4m+n)\}$ or $E \cap P_{m,n} = \{ (0,d, 4m+n)\ |\ 0 \leq d \leq 2m\}$.  It is easy to check that the former case happens precisely when $b \neq 0$, and the latter when $b=0$.

Case 5) : If $c=\frac{a-2b}{4}>\frac{b}{2}$, then $m^\prime=ma+cn$, and $E:=\{v \in \IR^3\ |\ f(v)=m^\prime\}$ is a plane containing $(m,0,n)$ and $(0,2m,4m+n)$ but not $(m,0,n)$.  Then $E \cap \conv P_{m,n}$ is the line segment from $(m,0,n)$ to $(0,2m,4m+n)$, giving the result.
\end{proof}

\begin{definition}
A degree function ${\rm deg}$ is called {\em $\beta$-lexicographic} if, writing $\beta=(X,Y,Z)$, ${\rm deg}(X)>{\rm deg}(Y)>{\rm deg}(Z)$.
\end{definition}
\begin{example}
The three lexicographic degrees, as well as the weighted degrees for the weights $(4,1,0)$, $(4,0,1)$, $(8,2,1)$, $(1,1,0)$, and $(3,3,1)$ are all $\beta$-lexicographic, as is the usual degree (i.e. the $(1,1,1)$-weighted degree).
\end{example}

\begin{proposition}\label{prop:A0-A1}
Let $m\ge 1$ and $n\ge 0$ be integers. Let $\alpha\in{\cal A}_0 \setminus {\cal A}_1={\cal A} \setminus {\cal B}$.
Then there exist $m^\prime \geq m$, $n^\prime \geq 0$ such that
$(\mathcal{P}_{m,n}^{*}) \alpha \beta \subset \mathcal{Q}_{m^\prime,n^\prime} ^*$.
\end{proposition}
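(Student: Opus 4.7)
My plan is to analyze $(P)\alpha\beta$ by tracking the three relevant degree functions on the components $(x_i)\alpha\beta$, then using the preceding lemma about extrema on $P_{m,n}$ to locate the vertex that produces the leading $\ldeg_2$-term.

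First I would write $(x_i)\alpha=a_{i1}x+a_{i2}y+a_{i3}z+c_i$ so that $(x_i)\alpha\beta=a_{i1}(x+y^2(y+z^2)^2)+a_{i2}(y+z^2)+a_{i3}z+c_i$. Inspecting monomials, for each $i\in\{1,2,3\}$ the value $\ldeg_2((x_i)\alpha\beta)$ is one of $(0,4,0),(0,1,0),(0,0,1)$, determined by which of $a_{i1},a_{i2},a_{i3}$ is the first nonzero; write $\ldeg_2((x_i)\alpha\beta)=(0,b_i,\gamma_i)$. A direct check gives $\deg_{(1,1,0)}((x_i)\alpha\beta)=b_i$ and $\deg_{(3,3,1)}((x_i)\alpha\beta)=3b_i+\gamma_i$, while the $\ldeg_2$-leading monomial of $(x_i)\alpha\beta$ has nonzero coefficient. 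Consequently, for every $v=(v_1,v_2,v_3)\in\supp(P)\subset P_{m,n}$ we get $\deg_{(1,1,0)}((\bx^v)\alpha\beta)\le\sum_i b_iv_i$, $\deg_{(3,3,1)}((\bx^v)\alpha\beta)\le\sum_i(3b_i+\gamma_i)v_i$, and $\ldeg_2((\bx^v)\alpha\beta)=(0,\sum_i b_iv_i,\sum_i\gamma_iv_i)$ with nonzero leading coefficient.

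Next I would invoke the preceding lemma with $(a,b,c)=(b_1,b_2,b_3)$ to identify $m':=\max_{v\in P_{m,n}}\sum_i b_iv_i$ and its set of maximizers; in the edge-maximum cases (2, 4, 5 of the lemma), the secondary weight $\sum_i\gamma_iv_i$ breaks the tie. A case analysis on $(b_1,b_2,b_3)\in\{0,1,4\}^3$---using invertibility of the linear part of $\alpha$ to eliminate many configurations, and excluding the single triangular case $(4,1,0)$---should show that the unique lex-maximizer $v^*$ of $(\sum_i b_iv_i,\sum_i\gamma_iv_i)$ is always one of $(0,4m,n)$ or $(0,2m,4m+n)$, both of which lie in $\supp(P)$ with nonzero coefficient by the defining conditions of $\mathcal{P}_{m,n}^*$.

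Setting $n':=\sum_i\gamma_iv_i^*$, the uniqueness of the lex-max together with nonvanishing of the relevant leading coefficients gives $\ldeg_2((P)\alpha\beta)=(0,m',n')$. A continuation of the case analysis checks that the same $v^*$ maximizes $\sum_i(3b_i+\gamma_i)v_i$ over $P_{m,n}$ (the crucial sign comparison $C_2-2C_3$ versus $b_2-2b_3$ agrees in every feasible configuration, and similarly for comparisons against the other vertices), yielding $\deg_{(3,3,1)}((P)\alpha\beta)\le 3m'+n'$; the bound $\deg_{(1,1,0)}((P)\alpha\beta)\le m'$ is automatic, and in either possibility for $v^*$ one has $m'\ge 4m\ge m$ and $n'\ge 0$, so $(P)\alpha\beta\in\mathcal{Q}_{m',n'}^*$. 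The main obstacle will be the case analysis itself: enumerating the $(b_1,b_2,b_3)$ triples compatible with both invertibility and non-triangularity, verifying that the tiebreak procedure consistently places $v^*$ at one of the two vertices in $\supp(P)$, and simultaneously witnessing the $(3,3,1)$-degree bound at the same $v^*$.
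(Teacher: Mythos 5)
Your proposal is correct and follows essentially the same route as the paper: your componentwise data $(b_i,\gamma_i)$ encodes exactly the paper's $15$ triples $(i',j',k')$, your linear forms $\sum b_iv_i$ and $\sum(3b_i+\gamma_i)v_i$ are the paper's $f$ and $g$, and both arguments reduce to the technical lemma on $P_{m,n}$ to show the unique maximizer is $(0,4m,n)$ or $(0,2m,4m+n)$, which lies in $\supp(P)$ by the definition of $\mathcal{P}_{m,n}^*$. The remaining case analysis you defer does go through exactly as you predict (the paper organizes it into three cases according to whether the second component, the third component, or only the first component of $\alpha$ involves $x$).
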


\begin{proof}
Let $\deg$ be a $\beta$-lexicographic degree function.  We observe that for any fixed $v=(i,j,k) \in \IN^3$, $\deg\left( (\bx^v)\alpha \beta\right)=\deg(X^{i'}Y^{j'}Z^{k'})$
for some triple $(i^\prime, j^\prime, k^\prime)\in\IN^3$.  Moreover, since $\alpha \in \mathcal{A}$ and  $\alpha\not\in {\cal B}$,
it must be the case that $(i^\prime, j^\prime, k^\prime)$ is one of the following 15 triples: $(i+j+k,0,0)$, $(i+j,k,0)$, $(i+j,0,k)$, $(i+k,j,0)$, $(i+k,0,j)$, $(j+k,i,0)$, $(j+k,0,i)$, $(i,k,j)$, $(j,i,k)$, $(j,k,i)$, $(k,i,j)$, $(k,j,i)$, $(i,j+k,0)$, $(j,i+k,0)$, $(k,i+j,0)$.
We alert the reader that the triple $(i^\prime, j^\prime, k^\prime)$ is determined only by $\alpha$ and is independent of $(i,j,k)$ and the choice of $\beta$-lexicographic degree function $\deg$. We have
\begin{align*}
\deg_{(1,1,0)}(X^{i'}Y^{j'}Z^{k'})&=4i'+j' \\
\deg_{(3,3,1)}(X^{i'}Y^{j'}Z^{k'})&=3(4i'+j')+k' \\
 {\rm ldeg}_2(X^{i'}Y^{j'}Z^{k'}) &=(0,4i'+j',k').
\end{align*}
We can write $4i'+j'=ai+bj+ck=f(i,j,k)$, and $3(4i'+j')+k'=a'i+b'j+c'k=(3a+\epsilon)i+(3b+\mu)j+(3c+\nu)k=g(i,j,k)$
where $a,b,c\in\{0,1,4\}$ and $a'=3a+\epsilon$, $b'=3b+\mu$ and $c'=3c+\nu$ where $\epsilon,\mu,\nu\in\{0,1\}$ (since $k'\in\{0,i,j,k\}$).  We note that $a^\prime$, $b^\prime$, and $c^\prime$ must all be nonzero.
We set
\begin{align*}
m' &=\max\{f(v)\ |\ v\in P_{m,n}\} \\ n'&=\max\{g(v)\ |\ v\in P_{m,n}\}-3m'.
\end{align*}
Now, we consider $P\in \mathcal{P}_{m,n}^{*}$. We recall that this implies $$\{(0,4m,n),(0,2m,4m+2n)\}\subset\supp(P)\subset P_{m,n}.$$
Setting $Q=(P)\alpha\beta$,we will show $Q \in\mathcal{Q}_{m^\prime,n^\prime} ^*$.  Note that $$\deg(Q)= \max _{(i,j,k) \in \supp(P)} \deg(X^{i^\prime}Y^{j^\prime}Z^{k^\prime})$$ where $(i',j',k')$
is one of the 15 triples above where $(i,j,k)\in\supp(P)$.  Since $\supp(P)\subset P_{m,n}$, our definitions of $m^\prime$ and $n^\prime$ immediately imply $\deg_{(1,1,0)}(Q)\le m'$ and $\deg_{(3,3,1)}(Q)\le 3m'+n'$. It remains to check that ${\rm ldeg}_2(Q)=(0,m',n')$; to do so, we show that
there exists a unique $v=(i,j,k)\in\supp(P)$ such that ${\rm ldeg}_2(X^{i'}Y^{j'}Z^{k'})=(0,m',n')$.  Equivalently, we show that there exists a unique $v=(i,j,k) \in \supp(P)$ such that $f(v)=m^\prime$ and $g(v)=3m^\prime+n^\prime$. \\

Case 1) Suppose $(i',j',k')\in\{(i+j,k,0), (i+j,0,k), (j,i,k), (j,k,i), (j,i+k,0)\}$. In this case $b=4$
(since $j$ appears in the first component) and $c\le 1$ (since $k$ does not appear in the first component).
We deduce $b>\max\{{a\over 4},2c\}$ and $b'>\max\{{a'\over 4},2c'\}$.  Note that $c^\prime \neq 0$, so applying the preceding lemma (Case 1) to $g$  yields that $v=(0,4m,n)$ is the unique element in $P_{m,n}$, hence in $\supp(P)$, such
that $g(v)=3m'+n^\prime$.  But the lemma (Case 1 or 2) applied to $f$ implies $f(0,4m,n)=m^\prime$ as required.  \\

Case 2) Suppose $(i',j',k')\in\{(i+j+k,0,0), (i+k,j,0), (j+k,i,0), (j+k,0,i), (k,j,i),(k,i+j,0), (i+k,0,j), (k,i,j) \}$.
In this case $c=4$ (since $k$ appears in the first component) .
We deduce $c>\max\{{b\over 2},{a-2b\over 4}\}$ and $c'>\max\{{b'\over 2},{a'-2b'\over 4}\}$.  Since $b^\prime \neq 0$,
the preceding lemma (Case 3) applied to $g$ gives that $v=(0,2m,4m+n)$ is the unique element in $P_{m,n}$, hence in $\supp(P)$, such that $g(v)=3m'+n^\prime$.  But the lemma (Case 3 or 4) applied to $f$ implies $f(0,2m,4m+n)=m^\prime$ as required.  \\

Case 3) Suppose $(i',j',k') \in \{(i,j+k,0), (i,k,j)\}$. In this case $a^\prime=12$ and $c^\prime=3$ (with $b^\prime \in \{1,3\}$).
We deduce $c'>\max\{{b'\over 2},{a'-2b'\over 4}\}$.
The lemma (Case 3) applied to $g$ yields that $v=(0,2m,4m+n)$ is the unique element in $P_{m,n}$, hence in $\supp(P)$, such that $g(v)=3m'+n^\prime$.  But the lemma applied to $f$ (case 3 or 5) implies $f(0,2m,4m+n)=m^\prime$ as required.
\end{proof}


\subsection{The triangular case}
Once again, we begin with a technical lemma which will aid in the proof of Proposition \ref{prop:A1-A2}
\begin{lemma}\label{lem:TC}
Let $m\ge 1$ and $n\ge 0$ be integers. Let $\gamma=(X,Y,Z)\in {\cal B}$ be such that
${\rm ldeg}_2(X)=(0,b,c)$ where $b\ge 1$ and $c\ge 0$, $\deg_{(3,3,1)}(X)=3b+c$ and $\deg_{(3,3,1)}(Y)=3$.
Then $(\mathcal{P}_{m,n}^{*})\pi\gamma\subset {\mathcal{Q}_{m',n'}^*}$ where $m'=4bm\ge m$ and $n'=4cm+n$.
\end{lemma}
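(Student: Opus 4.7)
My plan is to mimic the proof of Lemma~\ref{lem:PtoQ} with a coarser degree table, since the hypotheses only pin down the $(3,3,1)$-weighted degree and the $2$-lex degree of $X$ (plus $\deg_{(3,3,1)}(Y)$), rather than a full $\beta$-shape. First, I would unpack the triangularity: $X \in \Ik^* x + \Ik[y,z]$, $Y \in \Ik^* y + \Ik[z]$, $Z \in \Ik^* z + \Ik$. Combined with $\ldeg_2(X) = (0,b,c)$ and $b \geq 1$ (which forces the $y$-degree of every monomial of $X$ to be at most $b$, with equality realized by the term $y^b z^c$), this yields the data $\deg_{(1,1,0)}(X) = b$, $\deg_{(1,1,0)}(Y) = 1$, $\deg_{(1,1,0)}(Z) = 0$, $\deg_{(3,3,1)}(Z) = 1$, $\ldeg_2(Y) = (0,1,0)$, and $\ldeg_2(Z) = (0,0,1)$. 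These collected with the hypotheses give the full row-table needed.

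Next, fix $P \in \mathcal{P}^*_{m,n}$ and $v = (i,j,k) \in \supp(P) \subset P_{m,n}$. Since $(\bx^v)\pi\gamma = Y^i X^j Z^k$, multiplicativity of the degree functions on monomials gives
\[
\deg_{(1,1,0)}((\bx^v)\pi\gamma) = i + bj, \qquad \deg_{(3,3,1)}((\bx^v)\pi\gamma) = 3i + (3b+c)j + k,
\]
together with $\ldeg_2((\bx^v)\pi\gamma) = (0,\, i + bj,\, cj + k)$. I would then bound each of these linear forms over the polytope $P_{m,n}$. For $\deg_{(1,1,0)}$, the constraint $4i + j \leq 4m$ combined with $b \geq 1$ gives $i + bj \leq 4bm + i(1 - 4b) \leq 4bm = m'$. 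For $\deg_{(3,3,1)}$, the maximum over $P_{m,n}$ is attained at one of its seven vertices (as enumerated in the lemma preceding Proposition~\ref{prop:A0-A1}); a direct tabulation, using $b \geq 1$ and $c \geq 0$, identifies the maximum as $12bm + 4cm + n = 3m' + n'$, attained at the vertex $(0,4m,n)$. These bounds transfer from monomials to $(P)\pi\gamma$, so $(P)\pi\gamma \in \mathcal{Q}_{m',n'}$, and the inequality $m' = 4bm \geq m$ is immediate from $b \geq 1$.

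Finally, the crux is upgrading to $\mathcal{Q}^*_{m',n'}$ by showing $\ldeg_2((P)\pi\gamma) = (0, m', n') = (0, 4bm, 4cm + n)$. I would verify that among $v \in P_{m,n}$ the pair $(0, i+bj, cj+k)$ attains its $2$-lex maximum at the \emph{unique} point $v = (0,4m,n)$: the equation $i + bj = 4bm$ together with $4i+j \leq 4m$ and $b \geq 1$ forces $i = 0$ and $j = 4m$; the constraint $8i + 2j + k \leq 8m + n$ then gives $k \leq n$; and within these points $cj + k = 4cm + n$ pins down $k = n$. Since $P \in \mathcal{P}^*_{m,n}$ guarantees $(0,4m,n) \in \supp(P)$, the leading $2$-lex term of $(P)\pi\gamma$ is nonzero with the desired multi-degree. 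I expect this uniqueness/no-cancellation check to be the principal point requiring care, as it is precisely what distinguishes $\mathcal{Q}^*_{m',n'}$ from $\mathcal{Q}_{m',n'}$; the rest of the argument is a linear optimization over an explicit polytope.
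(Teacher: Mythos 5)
Your proposal is correct and follows essentially the same route as the paper's proof: derive the full degree table for $X,Y,Z$ from triangularity plus the stated hypotheses, compute the three degrees of $Y^iX^jZ^k$, bound the resulting linear forms over $P_{m,n}$, and verify that $(0,4m,n)$ is the unique $2$-lex maximizer so that no cancellation can occur. The only cosmetic difference is that you obtain the $(3,3,1)$-bound by optimizing over the vertices of $P_{m,n}$, whereas the paper writes the linear form directly as a nonnegative combination of the defining inequalities $4i+j\le 4m$ and $8i+2j+k\le 8m+n$; both are valid.
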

\begin{proof}
Note that the assumptions (particularly $\gamma \in {\cal B}$) immediately imply
\begin{align*}
\deg _{(1,1,0)} (X) &= b & \deg _{(1,1,0)} (Y) &= 1 & \deg _{(1,1,0)} (Z) &= 0 \\
\deg _{(3,3,1)} (X) &= 3b+c & \deg _{(3,3,1)} (Y) &=3 & \deg _{(3,3,1)} (Z) &= 1 \\
\ldeg _2 (X) &= (0,b,c) & \ldeg _2 (Y) &= (0,1,0) & \ldeg _2 (Z) &= (0,0,1)
\end{align*}
Let $v=(i,j,k) \in P_{m,n}$.  Noting that $(\bx^v)\pi\gamma=Y^{i}X^{j}Z^{k}$, we compute
\begin{align*}
\deg _{(1,1,0)} ( (\bx)^v \pi \gamma) &= i+bj \\&\leq b(4i+j) \\&\leq 4bm=m^\prime \\
\deg _{(3,3,1)} ( (\bx)^v \pi \gamma) &= 3i+(3b+c)j+k \\&\leq (3b+c-2)(4i+j)+(8i+2j+k)
\\&\leq (3b+c-2)(4m)+(8m+n) \\
&= 3m^\prime+n^\prime \\
\ldeg _2 ((\bx)^v \pi \gamma) &= (0, i+bj, cj+k) \\
&\leq _2 (0, b(4i+j), n^\prime) \\
&\leq _2 (0,m^\prime,n^\prime)
\end{align*}
Moreover, equality is attained in the last case if and only if $(i,j,k)=(0,4m,n)$, which is in the support of every element of $\mathcal{P}_{m,n}^*$.  Thus we see $(\mathcal{P}_{m,n}^{*})\pi\gamma\subset\mathcal{Q}_{m',n'}^*$ as desired.
\end{proof}

%
%

\begin{proposition}\label{prop:A1-A2}
If $\al\in{\cal A}_1\pri{\cal A}_2$ then $({\cal P}^*)\pi\beta^{-1}\al\beta\subset{\cal Q}^*$.
\end{proposition}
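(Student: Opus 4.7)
The plan is to verify that the conjugate $\gamma := \beta^{-1}\alpha\beta$ (which lies in $\mathcal{B}$ by closure of $\mathcal{B}$ under conjugation, since $\alpha \in \mathcal{A}_1 \subset \mathcal{B}$) satisfies the hypotheses of Lemma~\ref{lem:TC}; the conclusion $(\mathcal{P}^*)\pi\gamma \subset \mathcal{Q}^*$ then follows immediately.

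Write $\alpha = (a_1 x + a_2 y + a_3 z + a_4,\; b_1 y + b_2 z + b_3,\; c_1 z + c_2)$ with $a_1,b_1,c_1 \in \Ik^*$, and set $\epsilon := b_1 - c_1^2$, $\mu := b_2 - 2c_1c_2$, $\nu := b_3 - c_2^2$. A direct computation gives $Y_\gamma = b_1 y + \epsilon z^2 + \mu z + \nu$, whose leading monomial under $(3,3,1)$ is $b_1 y$; hence $\deg_{(3,3,1)}(Y_\gamma) = 3$. For the first component, the key identity $R := (\alpha\beta)_2 = b_1 Y + b_2 z + b_3 = Y_\gamma + (c_1 z + c_2)^2$ (where $Y = y + z^2$) yields
\[
X_\gamma = a_1 x + a_2 Y + a_3 z + a_4 + T, \qquad T := a_1\, y^2 Y^2 - R^2 Y_\gamma^2.
\]

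View $T$ as a polynomial in $y$ with coefficients in $\Ik[z]$: writing $T = \sum_{k=0}^{4} T_k(z)\, y^k$, one obtains the a priori bound $\deg_z T_k \leq 2(4-k)$; in particular $T_4 = a_1 - b_1^4$. The crucial combinatorial claim is that the simultaneous vanishing $T_2 = T_3 = T_4 = 0$ forces $\alpha \in \mathcal{A}_2$. Indeed, examining leading $z$-coefficients in decreasing order of $k$: $T_4 = 0$ gives $a_1 = b_1^4$; $T_3 = 0$ then forces $\epsilon = 0$, $b_2 = c_1 c_2$, and $2b_3 = c_2^2$; finally $T_2 = 0$ forces $b_2 = b_3 = 0$ (whence $c_2 = 0$ as $c_1 \ne 0$). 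Together with $u := c_1$, these are precisely the defining relations of $\mathcal{A}_2$. Since $\alpha \notin \mathcal{A}_2$, at least one of $T_2, T_3, T_4$ is nonzero; let $K$ be the largest such index and set $b := K \geq 2 \geq 1$ and $c := \deg_z T_K$.

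It remains to verify $\ldeg_2(X_\gamma) = (0, b, c)$ and $\deg_{(3,3,1)}(X_\gamma) = 3b + c$. The first is essentially immediate: the monomial $z^c y^K$ in $T_K(z) y^K$ is the unique one of maximal $y$-exponent in $X_\gamma$, since the lower part $a_1 x + a_2 Y + a_3 z + a_4$ has $y$-exponent at most $1 < b$. The $(3,3,1)$-equality is the main technical obstacle: the a priori bound $\deg_z T_k \leq 2(4-k)$ only gives $\deg_{(3,3,1)}(T_k y^k) \leq k + 8$, which suffices directly when $K = 4$. For $K \in \{2, 3\}$, one must further argue that the vanishing conditions $T_{K+1} = \cdots = T_4 = 0$ force matching cancellations in the top $z$-coefficients of the $T_k$ with $k < K$, keeping their $(3,3,1)$-weights at most $3K + c$. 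This case-by-case verification, while not individually difficult, is the heart of the technical argument. Once it is complete, Lemma~\ref{lem:TC} yields $(\mathcal{P}^*_{m,n})\pi\gamma \subset \mathcal{Q}^*_{4bm,\,4cm+n} \subset \mathcal{Q}^*$, proving the proposition.
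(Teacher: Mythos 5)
Your overall strategy coincides with the paper's: compute $\gamma=\beta^{-1}\alpha\beta=(X_\gamma,Y_\gamma,Z_\gamma)$ explicitly and verify the hypotheses of Lemma~\ref{lem:TC}. The parts you carry out are correct: your $T_k$ are exactly the paper's $F_k$, the identity $T=a_1y^2Y^2-R^2Y_\gamma^2$ is right, and your claim that $T_2=T_3=T_4=0$ forces $\alpha\in{\cal A}_2$ is a correct (and arguably cleaner) repackaging of the paper's nested case analysis: with $a_1=b_1^4$ and $T_3=0$ one finds $T_2=-2b_1^2P_2P_3$ where $P_2=b_1z^2+b_2z+b_3\neq 0$, so $P_3=0$ and $\alpha\in{\cal A}_2$. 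This correctly gives $b=K\ge 2$ and ${\rm ldeg}_2(X_\gamma)=(0,b,c)$.

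The genuine gap is the equality $\deg_{(3,3,1)}(X_\gamma)=3b+c$, which you explicitly defer; it is not a routine verification but is precisely the content of the paper's proof. Your a priori bound $\deg_{(3,3,1)}(T_ky^k)\le k+8$ yields the needed $3K+c$ only when $2K+c\ge 8$, i.e.\ for $K=4$ (where $c=0$) and for $K=3$ with $c=2$. It fails for $K=3$ with $c\le 1$, and it always fails for $K=2$, since there $T_2=-2b_1^2P_2P_3$ with $\deg_z P_3\le 1$, so $c\le 3<4$. In these cases one must use the vanishing of the higher $T_k$ to sharpen the $z$-degree bounds on the lower ones: for instance, when $K=3$ the relation $a_1=b_1^4$ makes the $z^4$-terms of $a_1z^4$ and $b_1^2P_2^2$ cancel inside $T_2$, and if moreover $c\le 1$ then $\epsilon=0$, so $\deg_z P_3\le 1$, capping $\deg_z T_2$ at $3$, $\deg_z T_1$ at $5$ and $\deg_z T_0$ at $6$, which is what keeps every weight at most $3K+c$. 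The paper performs exactly this verification through its chain of subcases ($a_1\ne b_2^4$; $e\ne 0$; $c_2+f\ne 0$; $d_2+g\ne 0$; $c_2+2f\ne 0$), and without it Lemma~\ref{lem:TC} cannot be invoked. A secondary remark: your argument, like the paper's, relies on the factors of $2$ appearing in $T_3$ and $T_2$ being units, so as written it does not cover characteristic $2$; since that issue is inherited from the source, the substantive missing piece remains the degree verification above.
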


\begin{proof} We write $\al=(a_1x+b_1y+c_1z+d_1,b_2y+c_2z+d_2,c_3z+d_3)$ with $a_1,b_2,c_3\in\Ik^*$ and $b_1,c_1,d_1,c_2,d_2,d_3\in\Ik$.  Set $\gamma = \beta ^{-1} \alpha \beta$, and write $\gamma=(X,Y,Z)$.  We will show that  we can apply Lemma~\ref{lem:TC} to $\gamma$.  A direct computation shows
\begin{align*}
Z&=c_3z+d_3 \\
Y&=b_2y+ez^2+fz+g \\
X&=a_1x+
F_4y^4+F_3y^3+F_2y^2+F_1y+F_0
\end{align*}
where
\begin{align*}
F_4 &= a_1-b_2^4 & F_3 &= 2(a_1z^2-b_2^3(Z_2+Z_3))  \\
F_2 &= a_1z^4-b_2^2(Z_2^2+4Z_2Z_3+Z_3^2) & F_1 &= b_1-2b_2(Z_2+Z_3)Z_2Z_3 \\
 F_0 &=Z_1-Z_2^2Z_3^2,
\end{align*}
\begin{align*}
e&=b_2-c_3^2 & f&=c_2-2c_3d_3 & g&=d_2-d_3^2,
\end{align*}
 and
\begin{align*}
Z_1& =b_1z^2+c_1z+d_1 & Z_2&=b_2z^2+c_2z+d_2 & Z_3&=ez^2+fz+g .
\end{align*}
We easily check that $\deg_{(3,3,1)}(Y)=3$. Set ${\rm ldeg}_2(X)=(0,b,c)$ where $b,c\ge 0$ (and in fact, $b \leq 4$).  It remains to be checked that $b \geq 1$ and $\deg _{(3,3,1)} (X) = 3b+c$.

Since $Z_1,Z_2$ and $Z_3$ are polynomials in $z$ of degree $\le 2$, the support of $X$ contains $(1,0,0)$
and some points $(0,j,k)$ such that $2j+k\le 8$ and $j\le b$.
Hence $$\deg_{(3,3,1)}(X)=\max\{3j+k\,;\,(0,j,k)\in\supp(X)\}\le 8+b.$$

We now examine the possible cases.  First, suppose $a_1\ne b_2^4$.  Then $F_4 \neq 0$, so $(b,c)=(4,0)$ and $\deg_{(3,3,1)}(X)=12=3b+c$, and Lemma \ref{lem:TC} completes the proof.

Next, assume for the remainder that $a_1=b_2^4$, so $F_4=0$.  If $e \neq 0$, then $F_3$ is a degree 2 polynomial in $z$, hence $(b,c)=(3,2)$ and $\deg_{(3,3,1)}(X)=11=3b+c$, and again Lemma \ref{lem:TC} completes the proof..

So we may now assume $e=0$ (so $Z_3=fz+g$ and $F_3=-2b_2^3 \left((c_2+f)z+2(d_2+g)\right)$) as well.  If $c_2+f \neq 0$, then $F_3$ is a degree 1 polynomial in $z$,  hence $(b,c)=(3,1)$ and $\deg_{(3,3,1)}(X)=10=3b+c$, and again Lemma \ref{lem:TC} completes the proof.

We now additionally assume $c_2+f=0$ (so $F_3=2(d_2+g)$) as well.  If $d_2+g \neq 0$, then $(b,c)=(3,0)$.  In this case, one can check that $F_2$ is a polynomial of degree at most 3, so $\deg _{(3,3,1)}(X)=9=3b+c$.

Next, we assume that $d_2+g=0$ as well (so $F_3=0$).  Then $F_2$ is a polynomial in $z$ of degree at most 3.  If $c_2+2f \neq 0$, $F_2$ has degree $3$, so $(b,c)=(2,3)$ and $\deg _{(3,3,1)}(X)=9=3b+c$.

Finally, we must consider the case $c_2+2f=0$.  Since we were already assuming $c_2+f=0$, we have $c_2=0$ and $f=0$, hence $d_3=0$.  But since $0=d_2+g=d_2+(d_2-d_3)^2=2d_2$, we have $d_2=0$.  Since $e=0$, we have $b_2=c_3^2$, and since $a_1=b_2^4$, we thus can simplify $\alpha$ to
$$\alpha = (c_3^8x+b_1y+c_1z+d_2, c_3^2y, c_3z)$$
and note that this precisely means that $\alpha \in \mathcal{A}_2$.  In particular, since $\alpha \notin \mathcal{A}_2$, we must have $b >1$.  Thus the result follows immediately from Lemma \ref{lem:TC}

\end{proof}

\begin{proposition}\label{prop:A2-A3}
If $\al\in{\cal A}_2\pri{\cal A}_3$ then $({\cal P}^*)\pi\beta^{-1}\al\beta\pi\beta\subset{\cal Q}^*$.
\end{proposition}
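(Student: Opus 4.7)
The plan is to reduce the problem to an application of Lemma~\ref{lem:TC} applied to $\gamma := \beta^{-1}\alpha\beta$, followed by one use of Proposition~\ref{prop:PtoQ} to handle the trailing $\pi\beta$.

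First, I would compute $\gamma$ explicitly. Since $\alpha\in{\cal A}_2\pri{\cal A}_3$ takes the form $\alpha=(u^8x+b_1y+c_1z+d_1,\,u^2y,\,uz)$ with $u\in\Ik^*$ and $b_1\in\Ik^*$ (the condition $b_1\neq 0$ is exactly what forces $\alpha\notin{\cal A}_3$), this is precisely the situation reached at the very end of the proof of Proposition~\ref{prop:A1-A2}: with the notation used there one has $a_1=b_2^4$, $e=f=g=0$ (so $Z_3=0$), and hence $F_4=F_3=F_2=0$, while $F_1=b_1$ and $F_0=Z_1=b_1z^2+c_1z+d_1$. Substituting gives
$$\gamma=(u^8x+b_1y+b_1z^2+c_1z+d_1,\,u^2y,\,uz)\in{\cal B}.$$

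Second, I would verify the hypotheses of Lemma~\ref{lem:TC} for this $\gamma$. Writing $X=u^8x+b_1y+b_1z^2+c_1z+d_1$: since $b_1\neq 0$, the term $b_1y$ dominates in the ${\rm ldeg}_2$ ordering, so ${\rm ldeg}_2(X)=(0,1,0)$, giving $b=1$ and $c=0$ in the lemma's notation. The remaining hypotheses $\deg_{(3,3,1)}(X)=3=3\cdot 1+0$ and $\deg_{(3,3,1)}(u^2y)=3$ are immediate. Applying Lemma~\ref{lem:TC} then yields $(\mathcal{P}^*_{m,n})\pi\gamma\subset\mathcal{Q}^*_{4m,n}$ for every $m\ge 1$, $n\ge 0$, and hence $(\mathcal{P}^*)\pi\beta^{-1}\alpha\beta\subset\mathcal{Q}^*$.

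Finally, Proposition~\ref{prop:PtoQ} gives $(\mathcal{Q}^*)\pi\beta\subset\mathcal{P}^*\subset\mathcal{Q}^*$, so composing yields $(\mathcal{P}^*)\pi\beta^{-1}\alpha\beta\pi\beta\subset\mathcal{Q}^*$ as desired.

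The only real obstacle is the explicit computation of $\gamma$, and this requires no new work: the proof of Proposition~\ref{prop:A1-A2} proceeded precisely by showing that unless Lemma~\ref{lem:TC} applies with $b>1$, one is forced into $\alpha\in{\cal A}_2$. For $\alpha\in{\cal A}_2\pri{\cal A}_3$ we land exactly in that remaining case, where $\ldeg_2(X)=(0,1,0)$ so Lemma~\ref{lem:TC} still applies (with $b=1$); the trailing $\pi\beta$ is then absorbed trivially by Proposition~\ref{prop:PtoQ}.
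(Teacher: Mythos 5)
Your proof is correct, but it takes a genuinely different route from the paper's. The paper computes the \emph{entire} composition $\pi\beta^{-1}\alpha\beta\pi\beta=(u^2X,\,u^8Y+b_1X+b_1z^2+c_1z+d_1,\,uz)$ (with $X=(x)\beta$, $Y=(y)\beta$) and observes that monomials then transform with the degree pattern $(i',j',k')=(i+j,0,k)$, so the conclusion follows from Case 1 of the proof of Proposition~\ref{prop:A0-A1}, i.e.\ from the non-triangular machinery built on the polytope lemma. You instead stop at $\gamma=\beta^{-1}\alpha\beta=(u^8x+b_1y+b_1z^2+c_1z+d_1,\,u^2y,\,uz)$ --- which is exactly the residual case isolated at the end of the proof of Proposition~\ref{prop:A1-A2} --- note that it is triangular with $\ldeg_2(X)=(0,1,0)$ and $\deg_{(3,3,1)}(X)=3$ because $b_1\neq 0$, and apply Lemma~\ref{lem:TC} with $(b,c)=(1,0)$ (the lemma only requires $b\ge 1$, and its hypotheses and proof do go through for $b=1$: the maximizer of $i+j$ on $P_{m,n}$ is still forced to be $(0,4m,n)$). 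The trailing $\pi\beta$ is then absorbed by Proposition~\ref{prop:PtoQ}. Your route stays entirely within the triangular-case machinery, reuses the computation already done for Proposition~\ref{prop:A1-A2}, and in fact yields the slightly stronger conclusion that the image lies in $\mathcal{P}^*$ (since $(\mathcal{Q}^*)\pi\beta\subset\mathcal{P}^*$); it also makes clear that the extra $\pi\beta$ in the statement plays no essential role for landing in $\mathcal{Q}^*$. The paper's route avoids re-examining Lemma~\ref{lem:TC} at the boundary value $b=1$ by converting the problem into one where the dominant monomial sits in two coordinates, at the cost of computing a longer composition. Both arguments are valid.
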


\begin{proof}
Since $\alpha \in \mathcal{A}_2$, we can write $\al=(u^8x+b_1y+c_1z+d_1,u^2y,uz)\in{\cal A}_2\pri{\cal A}_3$
for some $u,b_1\in\Ik^*$ and $c_1,d_1\in\Ik$.  A direct computation shows that $$\pi\beta^{-1}\al\beta\pi\beta=(u^2X,u^8Y+b_1X+b_1z^2+c_1z+d_1,uz)$$ where $X=(x)\beta = x+y^2(y+z^2)^2$ and
$Y=(y)\beta = y+z^2$. Since $b_1$ is nonzero, the rest of the proof proceeds exactly along the lines of the case $(i',j',k')=(i+j,0,k)$ (Case 1) in Proposition~\ref{prop:A0-A1}.
\end{proof}

\begin{proposition}\label{prop:A3-A4} If $\al\in{\cal A}_3\pri{\cal A}_4$ then
$({\cal P}^*)(\pi\beta^{-1})^2\al\beta\pi\beta\subset{\cal Q}^*$.
\end{proposition}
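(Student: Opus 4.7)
The plan is to reduce Proposition~\ref{prop:A3-A4} to the already-established Proposition~\ref{prop:A1-A2} via a short algebraic simplification, rather than attempting any new direct degree analysis. The crucial observation is that every element of $\mathcal{A}_3$ commutes with $\beta$. For $\alpha = (u^8 x + cz + d,\, u^2 y,\, uz) \in \mathcal{A}_3$, a direct check shows that both $\alpha\beta$ and $\beta\alpha$ equal $(u^8 x + u^8 y^2(y+z^2)^2 + cz + d,\ u^2(y+z^2),\ uz)$; the key cancellation is that applying $\alpha$ to $y^2(y+z^2)^2$ yields $(u^2 y)^2 (u^2 y + u^2 z^2)^2 = u^8 y^2(y+z^2)^2$, which exactly reproduces the nonlinear term contributed on the $\alpha\beta$ side.

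Given this commutation, I would use $\beta^{-1}\alpha\beta = \alpha$ together with $\pi^2 = \id$ to rewrite the composition as
\begin{align*}
(\pi\beta^{-1})^2 \alpha \beta \pi \beta &= \pi\beta^{-1}\pi(\beta^{-1}\alpha\beta)\pi\beta \\
 &= \pi\beta^{-1}\pi\alpha\pi\beta \\
 &= \pi\beta^{-1}\alpha'\beta,
\end{align*}
where $\alpha' := \pi\alpha\pi = (u^2 x,\ u^8 y + cz + d,\ uz)$. This $\alpha'$ is both affine and triangular, so $\alpha' \in \mathcal{A}_1$. Matching $\alpha'$ against the defining form $(v^8 x + by + c'z + d', v^2 y, vz)$ of $\mathcal{A}_2$, the third coordinate forces $v=u$, the second coordinate then demands $u^8 = u^2$ (i.e.\ $u^6 = 1$) together with $c = d = 0$, and the first coordinate becomes automatically compatible. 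These are precisely the relations cutting $\mathcal{A}_4$ out of $\mathcal{A}_3$, so $\alpha \in \mathcal{A}_3 \setminus \mathcal{A}_4$ if and only if $\alpha' \in \mathcal{A}_1 \setminus \mathcal{A}_2$.

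Applying Proposition~\ref{prop:A1-A2} to $\alpha'$ then yields $(\mathcal{P}^*)\pi\beta^{-1}\alpha'\beta \subset \mathcal{Q}^*$, which by the identity above is exactly the desired $(\mathcal{P}^*)(\pi\beta^{-1})^2\alpha\beta\pi\beta \subset \mathcal{Q}^*$. The only nontrivial step is recognizing the commutation $\alpha\beta = \beta\alpha$, which is forced by the specific exponents $(8,2,1)$ built into the definition of $\mathcal{A}_3$; once that is in hand, the conjugation by $\pi$ converts the $\mathcal{A}_3$-problem into an $\mathcal{A}_1$-problem that has already been treated, and no new support or degree analysis is required.
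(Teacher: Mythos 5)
Your proof is correct and follows essentially the same route as the paper: both reduce to Proposition~\ref{prop:A1-A2} by rewriting $(\pi\beta^{-1})^2\alpha\beta\pi\beta$ as $\pi\beta^{-1}\gamma\beta$ with $\gamma=\pi\beta^{-1}\alpha\beta\pi=(u^2x,\,u^8y+cz+d,\,uz)\in\mathcal{A}_1\setminus\mathcal{A}_2$. The only difference is cosmetic: the paper obtains $\gamma$ by a direct computation, whereas you derive it from the (correct and slightly more illuminating) observation that elements of $\mathcal{A}_3$ commute with $\beta$.
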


\begin{proof}
Since $\alpha \in \mathcal{A}_3$, we can write $\al=(u^8x+c_1z+d_1,u^2y,uz)$ for some $u\in\Ik^*$ and $c_1,d_1\in\Ik$.
We compute $$\gamma:=\pi\beta^{-1}\al\beta\pi=(u^2x,u^8y+c_1z+d_1,uz).$$
Clearly $\gamma\in{\cal A}_1$, and one easily checks that $\gamma \notin {\cal A}_2$.  Then Proposition~\ref{prop:A1-A2} implies that
$({\cal P}^*)(\pi\beta^{-1})^2\al\beta\pi\beta=({\cal P}^*)\pi\beta^{-1}\gamma\beta\subset{\cal Q}^*$.
\end{proof}

\end{document}